\theoremstyle{definition}
\newtheorem{theorem}{Theorem}[section]
\newtheorem{lemma}{Lemma}[section]
\numberwithin{equation}{section}
\def\<{\left < }
\def\>{\right >}
\def\({\left ( }
\def\){\right )}
\def\C2{${\bf C}^2$}
\begin{document}
 	\clearpage
 	\date{}

\title[Geom. ineq. on bi-warp. prod. subman. of locally conf. alm. cosympl.]{Geometric inequalities on bi-warped product submanifolds of locally conformal almost cosymplectic manifolds}

\author[R. Kaur]{Ramandeep KAUR}
\address{Ramandeep Kaur -  \textit{Department of Mathematics and Statistics, Central University of Punjab, Bathinda, Punjab-151 401, India}; \textit{E-mail Address: ramanaulakh1966@gmail.com}}
\author[G. Shanker]{Gauree SHANKER}
\address{Gauree Shanker - \textit{Department of Mathematics and Statistics, Central University of Punjab, Bathinda, Punjab-151 401, India}; \textit{E-mail Address: gauree.shanker@cup.edu.in}}
\author[A. Pigazzini]{Alexander PIGAZZINI$^*$}
\address{Alexander Pigazzini$^*$ - \textit{Mathematical and Physical Science Foundation, 4200 Slagelse, Denmark}; \textit{E-mail Address: pigazzini@topositus.com}}
\author[S. Jafari]{Saeid JAFARI}
\address{Saeid Jafari - \textit{Mathematical and Physical Science Foundation, 4200 Slagelse, Denmark and College of Vestsjaelland South, Herrestraede 11, 4200 Slagelse, Denmark}; \textit{E-mail Address: saeidjafari@topositus.com}}
\author[C. \"Ozel]{Cenap \"OZEL}
\address{Cenap \"Ozel - \textit{Department of Mathematics, Faculty of Science, King Abdulaziz University, 21589 Jeddah, Saudi Arabia}; \textit{E-mail Address: cozel@kau.edu.sa}}
\author[A. Mustafa]{Abdulqader MUSTAFA}
\address{Abdulqader Mustafa - \textit{Department of Mathematics, Faculty of Arts and Science, Palestine Technical University, Kadoorei, Tulkarm, Palestine}; \textit{E-mail Address: abdulqader.mustafa@ptuk.edu.ps}}

\maketitle
\begin{abstract}
In this paper we present not only some properties related to bi-warped product submanifolds of locally conformal almost cosymplectic manifolds, but also we show how the squared norm of the second fundamental form and the bi-warped product's warping functions are related  when the bi-warped product submanifold has a proper slant submanifold as a base or fiber.
\\
\\
\noindent{\it{AMS Subject Classification (2020)}}: {53C15, 53C18, 53C25, 53D15.}
\\
\\
\noindent{\it{Keywords}}: {Bi-warped product, slant submanifolds, almost cosymplectic manifolds.}

\end{abstract}
 \section{Introduction}
The warped product is one of the most fruitful generalizations of the notion of the Cartesian product and it was introduced by Bishop and O'Neill in \cite{Bish}.
The interest in warped products has gradually grown more and more, as they allow to find many exact solutions to Einstein's field equation. Examples include the Robertson-Walker models and the Schwarzschild solution, the latter laying the foundation for describing the final stages of gravitational collapse and objects known today as black holes (see \cite{O'Neill}).
\\
Over the years many types of warped-products have been studied among others, for example the Einstein multiply warped-product manifolds, Einstein Warped-twisted product manifolds, Einstein sequential warped product manifolds which are those that cover the widest variety of exact solutions to the field equation of Einstein; Recently, in \cite{Pigaz}, Pigazzini et al. studied a special case of Einstein sequential warped-product manifolds, which cover a wider variety of exact solutions to Einstein's field equation, than Einstein warped-product manifolds with Ricci-flat fiber F, without complicating the calculations. In particular they studied a conformal semi-Riemannian Einstein metrics showing the existence of solutions with positive scalar curvature.
The extrinsic and intrinsic Riemannian invariants have broad applications in many different fields of science and differential geometry and they are also of considerable importance in general relativity \cite{Oss}. Among the extrinsic invariants, second fundamental form and the squared mean curvatures are the most important ones, while among the main intrinsic invariants, sectional, Ricci and scalar curvatures are the well-known ones, as well as $\delta$ invariant.
In \cite{BY-Chen}, Chen initiated a significant inequality in terms of the intrinsic invariant  ($\delta$-invariant) and more recently (in \cite{BYChen}), using Codazzi equation, he establishes an inequality for the second fundamental form in terms of warping function.
Similar inequality has been studied for warped product immersions in cosymplectic space forms, in \cite{Uddin}, by Uddin and Alqahtani. Over the years, the inequalities of submanifolds in warped-geometry have become the subject of study by many authors (among many, see for example \cite{AAA1}, \cite{AA3}, \cite{AA1}, \cite{C6}, \cite{MA2}, \cite{US1}, \cite{UU3}, \cite{UU2}).
\\
The purpose of the present paper is not only show some properties related to bi-warped product submanifolds of locally conformal almost cosymplectic manifolds, but also show how the squared norm of the second fundamental form and the bi-warped product's warping functions are related when the bi-warped product submanifold has a proper slant submanifold as a base or fiber.
\\
The paper is organized as follows: In the \textit{"Preliminaries"} section, we will address the definition of almost contact manifold and the relation with Slant manifolds. Subsequently, in the section \textit{"Bi-Warped Product Submanifolds of a locally conformal almost cosymplectic manifold"}, we will report some notions, theorems and lemmas that will be fundamental for the main result of the paper. The last section, namely \textit{"Chen type inequality for Bi-Warped product immersions"}, which contains the main result of this paper, i.e., we show how the square norm of the second fundamental form and the warping functions of the bi-warped product (of locally conformal almost cosymplectic manifolds), are correlated, carrying out a double analysis: when the  bi-warped product submanifold has a proper slant submanifold as a base or as a fiber.
 \section{Preliminaries}
 An almost contact manifold $\tilde{\mathbf{L}}$ of dimension $2m+1$ satisfies the following with contact structure $(\Phi, \xi, \eta,g)$ 
 \begin{align}\label{a1}
 \Phi^{2}&=-I+\eta \otimes \xi\notag\\
 \eta(\xi)&=1\notag\\
 \Phi\xi&=0\notag\\
 \eta\circ\Phi&=0\\
 \label{a2}
 g(\Phi {\mathbf{X}_4}, \Phi {\mathbf{Y}_4})&= g({\mathbf{X}_4},{\mathbf{Y}_4})- \eta ({\mathbf{X}_4}) \eta ({\mathbf{Y}_4}),
 \end{align}
 for any ${\mathbf{X}_4},{\mathbf{Y}_4} \in \Gamma (\mathbf{T}\tilde{\mathbf{L}}),$ where the $\Gamma(\mathbf{T}\tilde{\mathbf{L}})$ denotes the Lie algebra of vector fields on $\tilde{\mathbf{L}}$. 
 An almost contact manifold $\tilde{\mathbf{L}}$ reduce to be a locally conformal almost cosymplectic manifold if and only if \cite{R80}
 \begin{align}\label{a3}
 (\tilde{\nabla}_{{\mathbf{X}_4}}\Phi){\mathbf{Y}_4} = \beta \left(g(\Phi {\mathbf{X}_4},{\mathbf{Y}_4})\xi -\eta ({\mathbf{Y}_4})\Phi {\mathbf{X}_4}\right),~\tilde{\nabla}_{{\mathbf{X}_4}}\xi= \beta({\mathbf{X}_4}-\eta ({\mathbf{X}_4})\xi),
 \end{align}   
 for any ${\mathbf{X}_4},{\mathbf{Y}_4} \in \Gamma(\mathbf{T}\tilde{\mathbf{L}}),$ where $\tilde{\nabla}$ denotes the Levi-Civita connection of $g.$\\
 
 The Gauss and Weingarten formulas for a submanifold $M$ in a Riemannian manifold $\tilde{\mathbf{L}}.$ are given by
 \begin{align}\label{a4}
 \tilde{\nabla}_{{\mathbf{X}_4}}{\mathbf{Y}_4}= \nabla_{{\mathbf{X}_4}}{\mathbf{Y}_4} +\Sigma ({\mathbf{X}_4},{\mathbf{Y}_4}),
 \end{align}
 \begin{align}\label{a5}
 \tilde{\nabla}_{{\mathbf{X}_4}}N= -A_{N}{\mathbf{X}_4} + \nabla_{{\mathbf{X}_4}}^{\perp}N,
 \end{align} 
 for any vector field ${\mathbf{X}_4},{\mathbf{Y}_4} \in \Gamma(\mathbf{TM})$ and $N\in \Gamma (\mathbf{T}^{\perp}\mathbf{L})$. The shape operator and second funadmental form are denoted by $A$ and $\Sigma$, respectively, and have following  relation 
 \begin{align}\label{a6}
 g(\Sigma({\mathbf{X}_4},{\mathbf{Y}_4}),N)=g(A_{N}{\mathbf{X}_4},{\mathbf{Y}_4}).
 \end{align}
 A submanifold $\mathbf{L}$ is said to be totally geodesic if $\Sigma=0$ and totally umbilical if $\Sigma({\mathbf{X}_4},{\mathbf{Y}_4})=g({\mathbf{X}_4},{\mathbf{Y}_4})H,~\forall {\mathbf{X}_4},{\mathbf{Y}_4} \in \Gamma(\mathbf{TM}),$ where $H=\frac{1}{n}\Sigma_{i=1}^{n}\Sigma(e_{i},e_{i})$ is the mean curvature vector of $\mathbf{L}.$ For any $x\in \mathbf{L}, \{e_{1},\dots,e_{n},\dots,e_{2m+1}\}$ is an orthonormal frame of the tangent space $\mathbf{T}_{x}\tilde{\mathbf{L}}$ such that $\{e_{1},\dots,e_{n}\}$ are tangent to $\mathbf{L}$ at $x.$ Then, we have 
 \begin{align}\label{a7}
 \Sigma_{ij}^{r}=g(\Sigma(e_{i},e_{j}),e_{r})
 \end{align}
 \begin{align}\label{a8}
 ||\Sigma||^{2}= \sum_{i,j=1}^{n} g(\Sigma(e_{i},e_{j}),\Sigma(e_{i},e_{j})).
 \end{align}
 for any $1\leq i, j\leq n$ and $1\leq r\leq 2m+1$. Then length of the gradient $\vec{\nabla}(\mathbf{\phi})$ for a differentiable function $\mathbf{\phi}$ manifold ${\mathbf{L}}$ is defined as
 \begin{align}\label{a9}
 ||\vec{\nabla}(\mathbf{\phi})||^{2}=\sum_{1}^{n}(e_{i}(\mathbf{\phi}))^{2},
 \end{align}
 for the orthonormal frame $\{e_{i},\dots,e_{n}\}$ on $\tilde{\mathbf{L}}.$  The tangential $\mathbf{TX}$ and normal components $\mathbf{FX}$ of $\Phi {\mathbf{X}_4}$ for any vector field ${\mathbf{X}_4}\in \Gamma(\mathbf{TM}),$ are decomposed as
 \begin{align}\label{a10}
 \Phi {\mathbf{X}_4}=\mathbb{\mathbf{T}}{\mathbf{X}_4}+\mathbb{\mathbf{\phi}}{\mathbf{X}_4}.
 \end{align}
 \begin{enumerate}
 	\item [(1)] A submanifold $\mathbf{L}$ tangent to $\xi$ is said to be slant if for any $x \in \mathbf{L}$ and any $X\in \mathbf{T}_{x}\mathbf{L},$ linearly independent to $\xi,$ the angle between $\Phi X$ and $\mathbf{T}_{x}\mathbf{L}$ is a constant $\nu \in [0,\pi/2],$ called the slant angle of $\mathbf{L}$ in $\tilde{\mathbf{L}}.$
 	\item[(2)] Invariant and anti-invariant submanifolds are $\nu$-slant submanifolds with slant angle $\nu=0$ and $\nu=\pi/2,$ respectively. Proper slant is a slant submanifold that is neither invariant nor anti-invariant..
 \end{enumerate}
 For slant submanifolds, we have the following characterisation theorem.

 \begin{theorem}\label{t21}
 	Let $\textbf{M}$ be a submanifold of an almost contact metric manifold $\tilde{M},$ such that $\xi \in \Gamma(\mathbf{TM}).$ Then $\mathbf{L}$ is slant if and only if there exists a constant $\lambda \in [0,1]$ such that
 	\begin{align}\label{a11}
 	\mathbf{T}^{2}= \lambda(-I+ \eta \otimes \xi).
 	\end{align}
 	Moreover, if $\nu$ is slant angle, then $\lambda=\cos^{2}\nu.$ \\
 \end{theorem}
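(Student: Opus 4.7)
The strategy is to characterize slantness through the tangential part $T$ of $\Phi$ (from \eqref{a10}) by computing $T^2$ via polarization. The whole argument rests on two preliminary observations I would verify first.

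First, I would establish the skew-symmetry of $T$ with respect to $g$: from \eqref{a2}, a short manipulation using $\Phi^{2}=-I+\eta\otimes\xi$ and $\eta\circ\Phi=0$ yields $g(\Phi {\mathbf{X}_4},{\mathbf{Y}_4})=-g({\mathbf{X}_4},\Phi {\mathbf{Y}_4})$. Since $F{\mathbf{X}_4}$ is normal, restricting to ${\mathbf{X}_4},{\mathbf{Y}_4}\in\Gamma(\mathbf{TM})$ gives $g(T{\mathbf{X}_4},{\mathbf{Y}_4})=-g({\mathbf{X}_4},T{\mathbf{Y}_4})$, so in particular $\|T{\mathbf{X}_4}\|^{2}=-g(T^{2}{\mathbf{X}_4},{\mathbf{X}_4})$ and $T^{2}$ is self-adjoint. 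Second, from \eqref{a2} with $\eta({\mathbf{X}_4})=g({\mathbf{X}_4},\xi)$ I get $\|\Phi {\mathbf{X}_4}\|^{2}=g({\mathbf{X}_4},{\mathbf{X}_4})-\eta({\mathbf{X}_4})^{2}$, which is exactly $-g((-I+\eta\otimes\xi){\mathbf{X}_4},{\mathbf{X}_4})$.

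For the forward direction, assume $\mathbf{L}$ is slant with angle $\nu$. For ${\mathbf{X}_4}\in \mathbf{T}_{x}\mathbf{L}$ linearly independent from $\xi$, the angle between $\Phi {\mathbf{X}_4}$ and its tangential projection $T{\mathbf{X}_4}$ equals $\nu$, and since $g(\Phi {\mathbf{X}_4},T{\mathbf{X}_4})=g(T{\mathbf{X}_4},T{\mathbf{X}_4})$ (the normal part drops out), this gives $\cos\nu=\|T{\mathbf{X}_4}\|/\|\Phi {\mathbf{X}_4}\|$. Combining with the two preliminary identities,
\begin{equation*}
-g(T^{2}{\mathbf{X}_4},{\mathbf{X}_4})=\|T{\mathbf{X}_4}\|^{2}=\cos^{2}\nu\,\|\Phi {\mathbf{X}_4}\|^{2}=\cos^{2}\nu\,\bigl(g({\mathbf{X}_4},{\mathbf{X}_4})-\eta({\mathbf{X}_4})^{2}\bigr).
\end{equation*}
Thus $g\bigl(T^{2}{\mathbf{X}_4}-\cos^{2}\nu(-I+\eta\otimes\xi){\mathbf{X}_4},{\mathbf{X}_4}\bigr)=0$ for every ${\mathbf{X}_4}$ not parallel to $\xi$; the formula also holds trivially on $\xi$ because $T\xi=0$. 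Since both operators involved are self-adjoint, polarization upgrades this to an operator identity, yielding \eqref{a11} with $\lambda=\cos^{2}\nu$.

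For the converse, I would reverse the chain. Assuming $T^{2}=\lambda(-I+\eta\otimes\xi)$ gives $\|T{\mathbf{X}_4}\|^{2}=\lambda\|\Phi {\mathbf{X}_4}\|^{2}$ for every tangent ${\mathbf{X}_4}$, hence the ratio $\|T{\mathbf{X}_4}\|/\|\Phi {\mathbf{X}_4}\|=\sqrt{\lambda}$ is a constant in $[0,1]$ (the bound coming from $\|T{\mathbf{X}_4}\|\le\|\Phi {\mathbf{X}_4}\|$, which itself forces $\lambda\in[0,1]$). This constant is exactly the cosine of the angle between $\Phi {\mathbf{X}_4}$ and $\mathbf{T}_{x}\mathbf{L}$, proving slantness with $\cos^{2}\nu=\lambda$. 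The only subtle point, and the place where I would be most careful, is justifying the passage from the scalar identity $g(T^{2}{\mathbf{X}_4},{\mathbf{X}_4})=\lambda g((-I+\eta\otimes\xi){\mathbf{X}_4},{\mathbf{X}_4})$ to the pointwise operator equality \eqref{a11}; this is why verifying the self-adjointness of $T^{2}$ at the outset is essential, since it is precisely what lets polarization do its work.
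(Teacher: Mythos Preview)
Your proof is correct and follows the standard route found in the literature (essentially the argument of Cabrerizo--Carriazo--Fern\'andez--Fern\'andez). Note, however, that the paper itself does not prove Theorem~\ref{t21}: it is quoted as a known characterisation result in the Preliminaries, with no proof supplied, and is then used as a black box to derive \eqref{a12}--\eqref{a13}. So there is nothing to compare against; your argument simply fills in what the paper leaves to the references.
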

 The following relationships are a straightforward result of \eqref{a11}.
 \begin{align}\label{a12}
 g(\mathbf{TX},\mathbf{TY})=\cos^{2}\nu\bigg\{g({\mathbf{X}_4},{\mathbf{Y}_4})- \eta ({\mathbf{X}_4})\eta ({\mathbf{Y}_4})\bigg\},
 \end{align}
 \begin{align}\label{a13}
 g(\mathbf{FX}, \mathbf{FY})=\sin^{2}\nu\bigg\{g({\mathbf{X}_4},{\mathbf{Y}_4})- \eta ({\mathbf{X}_4})\eta ({\mathbf{Y}_4})\bigg\},
 \end{align}
 for any ${\mathbf{X}_4},{\mathbf{Y}_4} \in \Gamma(\mathbf{TM}).$
 
 \section{Bi-Warped Product Submanifolds of a locally conformal almost cosymplectic manifold}
 Let $\mathbf{L}=\mathbf{L}_{1}\times \mathbf{L}_{2}\times \mathbf{L}_{3}$ be the Cartesian product of Riemannian manifolds $\mathbf{L}_{1},\,\,\mathbf{L}_{2}$ and $\mathbf{L}_{3}$. The canonical projections of $\mathbf{L}$ onto $\mathbf{L}_{i}$ are denoted and defined by $\pi_{i}:\mathbf{L}\rightarrow \mathbf{L}_{i}$ for each $i=1,2,3.$ If $\mathbf{\phi}_{2},\mathbf{\phi}_{3}:\mathbf{L}_{1}\rightarrow \mathbb{R}^{+}$ are positive real valued functions, then
 $$g({\mathbf{X}_4},{\mathbf{Y}_4})=g(\pi_{1*}{\mathbf{X}_4},\pi_{1*}{\mathbf{Y}_4})+(\mathbf{\phi}_{2}\circ \pi_{1})^{2}g(\pi_{2*}\mathbf{X_4},\pi_{2*}\mathbf{Y_4})+ (\mathbf{\phi}_{3}\circ \pi_{1})^{2}g(\pi_{3*}{\mathbf{X}_4},\pi_{3*}{\mathbf{Y}_4}),$$
 defines a Riemannian metric $g$ on $\mathbf{L}$. A product manifold $\mathbf{L}$ endowed with metric $g$ is called a bi-warped product manifold. In this case, $\mathbf{\phi}_{2}, \mathbf{\phi}_{3}$ are non-constant functions, called warping functions on $\mathbf{L}$. $\mathbf{L}$ is a simply Riemannian product manifold $\mathbf{L}=\mathbf{L}_{1}\times \mathbf{L}_{2}\times \mathbf{L}_{3}$ if both $\mathbf{\phi}_{2},\mathbf{\phi}_{3}$ are constant on $\mathbf{L}.$ 
 Let $\mathbf{L}=\mathbf{L}_{1}\times _{\mathbf{\phi}_{2}} \mathbf{L}_{2}\times _{\mathbf{\phi}_{3}} \mathbf{L}_{3}$ be a bi-warped product submanifold of a Riemannian manifold $\tilde{\mathbf{L}}.$ Then, we have
 \begin{align}\label{b1}
 \nabla_{{\mathbf{X}_4}}{\mathbf{Z}_4}=\sum_{i=2}^{3}({\mathbf{X}_4}(\ln \mathbf{\phi}_{i})){\mathbf{Z}_4}^{i},
 \end{align}
 for any ${\mathbf{X}_4}\in \mathfrak{D}_{1},$ the tangent space of $\mathbf{L}_{1}$ and ${\mathbf{Z}_4}\in \mathbf{TN},$ where $\mathbf{N}= _{\mathbf{\phi}_{2}} \mathbf{L}_{2}\times _{\mathbf{\phi}_{3}} \mathbf{L}_{3}$ and ${\mathbf{Z}_4}^{i}$ is $\mathbf{L}_{i}$-component of ${\mathbf{Z}_4}$ and $\nabla$ is Levi-Civita connection on $\mathbf{L}.$\\
 
 Let $\mathbf{\phi}_{1},\mathbf{\phi}_{2}:\mathbf{L}_{\nu}\rightarrow \mathbb{R}^{+}$ be non-constant functions. Then we consider the bi-warped product submanifolds of the form $\mathbf{L}= \mathbf{L}_{\nu}\times_{\mathbf{\phi}_{1}} \mathbf{L}_{\mathbf{L}T} \times _{\mathbf{\phi}_{2}} \mathbf{L}_{\perp}$ in a locally conformal almost cosymplectic manifold $\tilde{\mathbf{L}},$ where $\mathbf{L}_{\mathbf{T}},\mathbf{L}_{\perp}$ and $\mathbf{L}_{\nu}$ are invariant, anti-invariant and proper slant submanifolds of $\tilde{\mathbf{L}},$ respectively. In this case, the tangent and normal space of $\mathbf{L}$ are decomposed as according to the integrals manifolds $\mathbf{L}_{\mathbf{T}},\,\,\mathbf{L}_{\perp}$ and $\mathbf{L}_{\nu}$ of $\mathfrak{D},\,\,\mathfrak{D}^{\perp}$ and $\mathfrak{D}^{\nu},$ respectively.
 
 \begin{align}\label{b2}
 \mathbf{TM}= \mathfrak{D}\oplus \mathfrak{D}^{\perp}\oplus\mathfrak{D}^{\nu}\oplus \langle \xi \rangle
 \end{align}
 and 
 \begin{align}\label{b3}
 \mathbf{T}^{\perp}\mathbf{L}=\Phi \mathfrak{D}^{\perp}\oplus \mathbf{\phi}\mathfrak{D}^{\nu}\oplus \mu, ~ \Phi \mathfrak{D}^{\perp}\oplus \mathbf{\phi}\mathfrak{D}^{\nu}\perp \mu,
 \end{align}
 where $\mu$ is an $\Phi$-invariant normal subbundle of $\mathbf{T}^{\perp}\mathbf{L}.$ Now we obtain some classification theorems as follows
 \begin{theorem}
 	If $\xi$ is tangent to either $\xi \in \Gamma(\mathfrak{D})$ or $\xi \in \Gamma(\mathfrak{D}^{\perp}),$ then  a bi-warped product submanifold of the type $\mathbf{L}=\mathbf{L}_{\nu}\times_{\mathbf{\phi}_{1}} \mathbf{L}_{\mathbf{T}} \times _{\mathbf{\phi}_{2}} \mathbf{L}_{\perp}$ in a locally conformal almost cosymplectic manifold $\tilde{\mathbf{L}}$ is a single warped product.	
 \end{theorem}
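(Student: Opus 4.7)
The plan is to establish, in each of the two cases, that one of the warping functions is forced to be constant; the bi-warped product then automatically degenerates into an ordinary warped product.

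\textbf{Case 1:} $\xi \in \Gamma(\mathfrak{D})$, i.e.\ $\xi$ is tangent to $\mathbf{L}_{\mathbf{T}}$. For an arbitrary $X \in \Gamma(\mathfrak{D}^{\nu})$ the orthogonality of the decomposition \eqref{b2} forces $\eta(X) = g(X,\xi) = 0$. Applying the connection formula \eqref{b1} with $Z = \xi$ (whose $\mathbf{L}_{\mathbf{T}}$-component equals $\xi$ and whose $\mathbf{L}_{\perp}$-component vanishes) yields
\begin{equation*}
\nabla_{X}\xi \;=\; X(\ln \phi_{1})\, \xi .
\end{equation*}
Combining this with the Gauss formula \eqref{a4} and the structure equation \eqref{a3}, which reduces under $\eta(X)=0$ to $\tilde{\nabla}_{X}\xi = \beta X$, I obtain
\begin{equation*}
\beta X \;=\; X(\ln \phi_{1})\, \xi \,+\, \Sigma(X, \xi).
\end{equation*}
Taking the $g(\cdot,\xi)$ inner product of both sides kills the left-hand side (because $X \perp \xi$) and also kills $\Sigma(X,\xi)$ (because it is normal to $\mathbf{L}$), leaving $X(\ln \phi_{1}) = 0$. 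Since $X \in \Gamma(\mathfrak{D}^{\nu})$ is arbitrary and $\phi_{1}$ is defined on $\mathbf{L}_{\nu}$, I conclude that $\phi_{1}$ is constant; hence $\mathbf{L}$ collapses to the single warped product $(\mathbf{L}_{\nu} \times \mathbf{L}_{\mathbf{T}}) \times_{\phi_{2}} \mathbf{L}_{\perp}$.

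\textbf{Case 2:} $\xi \in \Gamma(\mathfrak{D}^{\perp})$, i.e.\ $\xi$ is tangent to $\mathbf{L}_{\perp}$. Exactly the same three-ingredient scheme -- formula \eqref{b1} for the intrinsic connection, the Gauss formula \eqref{a4}, the structure equation \eqref{a3}, followed by projection onto $\xi$ -- now produces $X(\ln \phi_{2}) = 0$ for every $X \in \Gamma(\mathfrak{D}^{\nu})$. Thus $\phi_{2}$ is constant and $\mathbf{L}$ becomes the single warped product $(\mathbf{L}_{\nu} \times \mathbf{L}_{\perp}) \times_{\phi_{1}} \mathbf{L}_{\mathbf{T}}$.

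The computation is short; the conceptual step is locating the correct test vector and the correct projection. Choosing $X$ in the slant distribution is what produces $\eta(X) = 0$ and reduces \eqref{a3} to $\tilde{\nabla}_{X}\xi = \beta X$, and projecting onto $\xi$ is what simultaneously kills the tangential term $\beta X$ and the normal term $\Sigma(X,\xi)$, isolating the derivative of $\ln \phi_{i}$. The main obstacle, if any, is simply verifying that \eqref{b1} is legitimately applicable with $Z=\xi$: this is exactly the content of the case hypothesis, since it requires $\xi$ to belong genuinely to a fiber factor. The same argument would not run when $\xi \in \Gamma(\mathfrak{D}^{\nu})$, because then $\xi$ is tangent to the base and \eqref{b1} cannot be invoked with $Z = \xi$; this is precisely why the statement is restricted to the two cases above.
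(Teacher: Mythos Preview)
Your argument is correct and uses exactly the same ingredients as the paper's proof: the structure equation \eqref{a3} for $\tilde{\nabla}_{\cdot}\xi$, the Gauss formula \eqref{a4}, the warped-product connection rule \eqref{b1}, and a final projection onto $\xi$ to isolate the derivative of the relevant warping function. The only difference is the choice of test vector: the paper takes $\mathbf{U_4}\in\Gamma(\mathfrak{D})$ (the fiber $\mathbf{L}_{\mathbf T}$), whereas you take $X\in\Gamma(\mathfrak{D}^{\nu})$ (the base $\mathbf{L}_{\nu}$). Your choice is the one for which \eqref{b1} applies verbatim, since that formula requires the differentiating vector to lie in the base; with the paper's choice one must first invoke the symmetry $\nabla_X Z=\nabla_Z X$ for lifts before \eqref{b1} becomes usable, and even then the conclusion $\mathbf{U_4}(\ln\phi_1)=0$ carries no information because $\phi_1$ lives on $\mathbf{L}_{\nu}$. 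So your version is in fact the cleaner implementation of the same idea.
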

 \begin{proof}
 	If $\xi \in \Gamma(\mathfrak{D}),$ then for any $\mathbf{U_4} \in \Gamma(\mathfrak{D}),$ we get
 	\begin{align*}
 	\tilde{\nabla}_{\mathbf{U_4}} \xi =\beta \mathbf{U_4}.
 	\end{align*}
 	Using \eqref{a4} and \eqref{b1}, we obtain	
 	\begin{align*}
 	\mathbf{U_4}(\ln \mathbf{\phi}_{1})\xi =\beta \mathbf{U_4}.
 	\end{align*}
 	We get the following by multiplying $\xi$ by the inner product and utilizing the fact that $\xi \in \Gamma(\mathfrak{D}),$ 
 	\begin{align*}
 	\mathbf{U_4}(\ln \mathbf{\phi}_{1})=0.
 	\end{align*}
 	As a result, $\mathbf{\phi}_{1}$ is constant, and so $\mathbf{L}$ is a warped product manifold. In the same way, we may simply achieve that.
 	\begin{align*}
 	\mathbf{U_4}(\ln \mathbf{\phi}_{2})=0,
 	\end{align*}
 	which implies $\mathbf{\phi}_{2}$ is constant and thus the proof.
 \end{proof}
 \begin{theorem}
 	Let $\xi$ is tangent to $\mathbf{L}_{\nu}$ on a bi-warped product submanifold of the type $\mathbf{L}=\mathbf{L}_{\nu}\times_{\mathbf{\phi}_{1}} \mathbf{L}_{T} \times _{\mathbf{\phi}_{2}} \mathbf{L}_{\perp}$ in a  locally conformal almost cosymplectic manifold $\tilde{\mathbf{L}},$. Then we have
 	\begin{align}\label{b4}
 	\xi (\ln \mathbf{\phi}_{i})=\beta,\,\,\,\,\forall~ i=1,2.
 	\end{align}
 \end{theorem}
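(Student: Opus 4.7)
The plan is to compute $\tilde{\nabla}_Z\xi$ in two different ways for a vector field $Z$ tangent to the fiber $\mathbf{L}_T$, and then equate the results. One expression will come from the structure equation \eqref{a3} characterizing the locally conformal almost cosymplectic manifold, and the other from combining the Gauss formula \eqref{a4} with the warped-product connection formula \eqref{b1}.

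First, since $\xi \in \Gamma(T\mathbf{L}_\nu)$ and $Z \in \Gamma(T\mathbf{L}_T)$ lie in orthogonal factors of the bi-warped product, we have $\eta(Z)=g(Z,\xi)=0$. Thus the second part of \eqref{a3} reduces to $\tilde{\nabla}_Z\xi = \beta Z$. On the other hand, taking $\xi \in \mathfrak{D}_1$ (the base direction) and applying \eqref{b1} with $Z$ tangent to $\mathbf{L}_T$ (so that $Z^2=Z$ and $Z^3=0$) gives $\nabla_\xi Z = \xi(\ln\phi_1)\,Z$. Because $\xi$ and $Z$ are lifts of vector fields from different factors of the product, their Lie bracket vanishes, so $\nabla_Z\xi=\nabla_\xi Z=\xi(\ln\phi_1)\,Z$. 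The Gauss formula \eqref{a4} then yields $\tilde{\nabla}_Z\xi=\xi(\ln\phi_1)\,Z+\Sigma(Z,\xi)$.

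Comparing the two expressions for $\tilde{\nabla}_Z\xi$ and separating tangential and normal components, the tangential part gives $\xi(\ln\phi_1)\,Z=\beta Z$, i.e.\ $\xi(\ln\phi_1)=\beta$. Repeating the identical argument with $Z\in\Gamma(T\mathbf{L}_\perp)$ (so that now $Z^3=Z$ and $Z^2=0$) produces $\xi(\ln\phi_2)=\beta$, which proves \eqref{b4}. As a byproduct, the normal part of the comparison yields $\Sigma(Z,\xi)=0$ for all $Z$ in the fiber, but this is not needed for the statement.

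The only subtle point — and the main thing to verify carefully — is the identification $\nabla_Z\xi=\nabla_\xi Z$, which relies on the vanishing of $[\xi,Z]$ for lifts from distinct factors of the warped product, together with the correct bookkeeping of which warping function goes with which fiber in the notation $\mathbf{L}_\nu\times_{\phi_1}\mathbf{L}_T\times_{\phi_2}\mathbf{L}_\perp$. Once these are handled, the argument is a direct two-line computation for each of the two warping functions.
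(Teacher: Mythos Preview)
Your proof is correct and follows essentially the same approach as the paper: apply the structure equation \eqref{a3} to get $\tilde{\nabla}_{Z}\xi=\beta Z$ for $Z$ in a fiber distribution, then combine the Gauss formula \eqref{a4} with the warped-product relation \eqref{b1} to obtain $\tilde{\nabla}_{Z}\xi=\xi(\ln\phi_i)Z+\Sigma(Z,\xi)$, and compare. The only difference is that you make explicit the step $\nabla_{Z}\xi=\nabla_{\xi}Z$ via $[\xi,Z]=0$, which the paper leaves implicit when invoking \eqref{b1}.
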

 \begin{proof}
 	For any ${\mathbf{X}_4} \in \Gamma(\mathfrak{D}),$ we have 
 	$$\tilde{\nabla}_{{\mathbf{X}_4}} \xi =\beta {\mathbf{X}_4}.$$ 
 	Utilizing \eqref{a4} and \eqref{b1}, we obtain	
 	\begin{align*}
 	\xi (\ln \mathbf{\phi}_{1}){\mathbf{X}_4}=\beta {\mathbf{X}_4}.
 	\end{align*}
 	Taking the inner product with ${\mathbf{X}_4}$, we derive
 	\begin{align*}
 	\xi (\ln \mathbf{\phi}_{1})=\beta.
 	\end{align*}
 	Similarily, it can be obtain
 	\begin{align*}
 	\xi (\ln \mathbf{\phi}_{2})=\beta.
 	\end{align*}
 \end{proof}
 \begin{lemma}\label{l33}
 	Let $\mathbf{L}=\mathbf{L}_{\nu}\times_{\mathbf{\phi}_{1}} \mathbf{L}_{\mathbf{T}} \times _{\mathbf{\phi}_{2}} \mathbf{L}_{\perp}$ be a bi-warped product submanifold of locally conformal almost cosymplectic manifold $\tilde{\mathbf{L}}.$ Then, we have 
 	\begin{itemize}
 		\item [(i)] $g(\Sigma({\mathbf{X}_4},{\mathbf{Y}_4}), \mathbf{FU_4})=\Big((\mathbf{U_4}\ln \mathbf{\phi}_{1})-\beta\eta(\mathbf{U_4})\Big)g({\mathbf{X}_4},\Phi {\mathbf{Y}_4})+ \mathbf{TU_4}(\ln \mathbf{\phi}_{1})g({\mathbf{X}_4},{\mathbf{Y}_4}),$
 		\item [(ii)] $g(\Sigma({\mathbf{Z}_4},\mathbf{W_4}),\mathbf{FU_4})= \beta g(\Sigma(\mathbf{U_4},\mathbf{W_4}),\Phi {\mathbf{Z}_4})+\mathbf{TU_4}(\ln \mathbf{\phi}_{2})g({\mathbf{Z}_4},\mathbf{W_4}),$
 		\item [(iii)] $g(\Sigma(\mathbf{U_4},\mathbf{V_4}),\Phi {\mathbf{Z}_4})=g(\Sigma(\mathbf{U_4},{\mathbf{Z}_4}),\mathbf{FV}),$
 		\item [(iv)] $g(\Sigma({\mathbf{X}_4},\mathbf{V_4}),\mathbf{FU_4})=0,$\\
 		for any ${\mathbf{X}_4},{\mathbf{Y}_4}\in \Gamma(\mathfrak{D}),~ \mathbf{U_4},\mathbf{V_4}\in \Gamma(\mathfrak{D}^{\nu}\oplus\xi)$ and ${\mathbf{Z}_4},\mathbf{W_4}\in \Gamma(\mathfrak{D}^{\perp}).$
 	\end{itemize}
 \end{lemma}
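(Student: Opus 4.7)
The overall plan is to evaluate each inner product $g(\Sigma(\cdot,\cdot), N)$ by first replacing $\Sigma$ with the ambient derivative using Gauss (\ref{a4}), then sliding $\Phi$ across the inner product via the antisymmetry $g(\Phi A, B) = -g(A, \Phi B)$ (a consequence of (\ref{a2}) together with $\eta\circ\Phi = 0$), and finally trading $\Phi\tilde\nabla$ for $\tilde\nabla\Phi$ via (\ref{a3}) at the cost of an explicit $\beta$-correction. The warping functions enter everywhere through the bi-warped product identity (\ref{b1}), which in our setup reads $\nabla_U V = (U\ln\phi_1)V$ when $U\in\mathfrak{D}^\nu\oplus\langle\xi\rangle$ and $V\in\mathfrak{D}$, and analogously with $\phi_2$ when $V\in\mathfrak{D}^\perp$.

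For (i), I would begin from $g(\Sigma(X,Y), FU) = g(\tilde\nabla_X Y, \Phi U) - g(\tilde\nabla_X Y, TU)$, obtained by writing $FU = \Phi U - TU$ and using normality of $\Sigma(X,Y)$. The first piece equals $-g(\Phi\tilde\nabla_X Y, U)$; splitting via (\ref{a3}) into $-g(\tilde\nabla_X\Phi Y, U) + g((\tilde\nabla_X\Phi)Y, U)$ and noting $\Phi Y\in\mathfrak{D}$, only $g(\nabla_X\Phi Y, U)$ survives from the first summand, and a one-step integration by parts via (\ref{b1}) converts it into $-(U\ln\phi_1)g(X,\Phi Y)$; the $(\tilde\nabla_X\Phi)Y$ correction (with $\eta(Y)=0$) supplies $-\beta\eta(U)g(X,\Phi Y)$. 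The $TU$ piece is handled symmetrically through $g(\nabla_X Y, TU) = -g(Y, \nabla_X TU) = -(TU\ln\phi_1)g(X,Y)$. Part (ii) runs on the same template with $Z,W\in\mathfrak{D}^\perp$: here $(\tilde\nabla_Z\Phi)W$ vanishes outright because $\eta(W)=0$ and $g(\Phi Z, W)=0$, so $\Phi\tilde\nabla_Z W = \tilde\nabla_Z FW = -A_{FW}Z + \nabla_Z^\perp FW$ by Weingarten (\ref{a5}); pairing with $U$ produces the shape-operator term, and the warping contribution arises exactly as in (i).

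Part (iii) is cleanest if I apply $\tilde\nabla_U$ to $\Phi Z = FZ$ with $U\in\mathfrak{D}^\nu\oplus\langle\xi\rangle$ and $Z\in\mathfrak{D}^\perp$: the correction $(\tilde\nabla_U\Phi)Z$ vanishes for the same orthogonality reasons as in (ii), and Weingarten on the left with Gauss on the right give $-A_{FZ}U + \nabla_U^\perp FZ = \Phi\nabla_U Z + \Phi\Sigma(U,Z)$. Equating tangential parts, pairing with $V\in\mathfrak{D}^\nu\oplus\langle\xi\rangle$, and disposing of $\Phi\nabla_U Z = (U\ln\phi_2)FZ$ via (\ref{b1}) yields the asserted identity.

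The hard part will be (iv). A direct Gauss-formula expansion of the same type produces only the symmetry $g(\Sigma(X,V), FU) = g(\Sigma(X,U), FV)$, which by itself does not vanish. The missing ingredient is the identity $\Sigma(V,\Phi X) = \Phi\Sigma(X,V)$, which I would derive by applying $\tilde\nabla_U$ to $\Phi X$ for $X\in\mathfrak{D}$: the $(\tilde\nabla_U\Phi)X$ correction vanishes because $g(\Phi U, X)=0$ and $\eta(X)=0$, and balancing tangential and normal components using Gauss together with $\nabla_U\Phi X = (U\ln\phi_1)\Phi X$ forces this formula. Since the left-hand side $\Sigma(V,\Phi X)$ is normal-valued, the equation forces the tangential part of $\Phi\Sigma(X,V)$ to vanish. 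Unpacking the decomposition (\ref{b3}) together with (\ref{a11}), the tangential part of $\Phi$ acting on $\Phi\mathfrak{D}^\perp$, $F\mathfrak{D}^\nu$, and $\mu$ returns respectively $-Z$, $-\sin^2\nu\,U$, and $0$; since these three summands are linearly independent, the only way for $\Phi\Sigma(X,V)$ to be normal-valued is $\Sigma(X,V)\in\Gamma(\mu)$, from which $g(\Sigma(X,V), FU) = 0$ is immediate.
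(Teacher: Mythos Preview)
Your arguments for (i)--(iii) are correct and follow essentially the same template as the paper: expand $FU=\Phi U-TU$, trade $\Phi\tilde\nabla$ for $\tilde\nabla\Phi$ via the structure equation (\ref{a3}), and pick up the warping terms through (\ref{b1}).

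For (iv), your proof works but is considerably more elaborate than the paper's. You call this ``the hard part'' and claim that a direct Gauss--formula expansion yields only the symmetry $g(\Sigma(X,V),FU)=g(\Sigma(X,U),FV)$; that is true if you differentiate in the $X$ direction, but the paper instead differentiates in the \emph{base} direction $V$. Writing $g(\Sigma(X,V),FU)=g(\tilde\nabla_V X,\Phi U)-g(\tilde\nabla_V X,TU)$, one has $(\tilde\nabla_V\Phi)X=0$ from (\ref{a3}) since $g(\Phi V,X)=0$ and $\eta(X)=0$, and $g(\tilde\nabla_V\Phi X,U)=g(\nabla_V\Phi X,U)=(V\ln\phi_1)g(\Phi X,U)=0$ because $\Phi X\in\mathfrak{D}$ is orthogonal to $U\in\mathfrak{D}^\nu\oplus\langle\xi\rangle$; the $TU$ term vanishes for the same reason. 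So the key observation you missed is that $\Phi X$ remains tangent (indeed in $\mathfrak{D}$), which kills every term immediately. Your route via $\Sigma(V,\Phi X)=\Phi\Sigma(X,V)$ and the normal decomposition (\ref{b3}) does establish the stronger conclusion $\Sigma(\mathfrak{D},\mathfrak{D}^\nu)\subset\mu$, which is a pleasant byproduct, but it is not needed for the lemma as stated.
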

 \begin{proof}
 	For any ${\mathbf{X}_4},{\mathbf{Y}_4}\in \Gamma(\mathfrak{D})$ and  $\mathbf{U_4}\in \Gamma(\mathfrak{D}^{\nu}\oplus\xi)$ we have
 	\begin{equation*}
 	\begin{split}
 	g(\Sigma({\mathbf{X}_4},{\mathbf{Y}_4}),\mathbf{FU_4})&=g(\tilde{\nabla}_{{\mathbf{X}_4}}{\mathbf{Y}_4},\mathbf{FU_4})\\
 	&=g(\tilde{\nabla}_{{\mathbf{X}_4}}{\mathbf{Y}_4},\Phi \mathbf{U_4})-(\tilde{\nabla}_{{\mathbf{X}_4}}{\mathbf{Y}_4},\mathbf{TU_4})\\
 	&=g((\tilde{\nabla}_{{\mathbf{X}_4}}\Phi){\mathbf{Y}_4},\mathbf{U_4})-g(\tilde{\nabla}_{{\mathbf{X}_4}}\Phi {\mathbf{Y}_4},\mathbf{U_4})+g({\mathbf{Y}_4},\tilde{\nabla}_{{\mathbf{X}_4}}\mathbf{TU_4}).
 	\end{split}
 	\end{equation*} 
 	Using \eqref{a3} and \eqref{b1}, we get	
 	\begin{align*}
 	g(\Sigma({\mathbf{X}_4},{\mathbf{Y}_4}), \mathbf{FU_4})=\Big((\mathbf{U_4}\ln \mathbf{\phi}_{1})-\beta\eta(\mathbf{U_4})\Big)g({\mathbf{X}_4}, \Phi {\mathbf{Y}_4}) +\mathbf{TU_4}(\ln \mathbf{\phi}_{1})g({\mathbf{X}_4}, {\mathbf{Y}_4}).
 	\end{align*}
 	This is a first part. For the second, we have
 	\begin{align*}
 	g(\Sigma({\mathbf{Z}_4}, \mathbf{W_4}), \mathbf{FU_4})=g(\tilde{\nabla}_{{\mathbf{Z}_4}}\mathbf{W_4}, \Phi \mathbf{U_4})-g(\tilde{\nabla}_{{\mathbf{Z}_4}}\mathbf{W_4}, \mathbf{FU_4}),
 	\end{align*}
 	for any ${\mathbf{Z}_4},\mathbf{W_4}\in \Gamma(\mathfrak{D}^{\perp}).$  From the virtue \eqref{a2}, we obtain
 	\begin{equation*}
 	\begin{split}
 	g(\Sigma({\mathbf{Z}_4},\mathbf{W_4}),\mathbf{FU_4})&=g((\tilde{\nabla}_{{\mathbf{Z}_4}}\Phi)\mathbf{W_4},\mathbf{U_4})-g(\tilde{\nabla}_\textbf{Z}{\Phi}\mathbf{W_4}, \mathbf{U_4})-g(\mathbf{W_4},\tilde{\nabla}_{{\mathbf{Z}_4}}\mathbf{FU_4}).
 	\end{split}
 	\end{equation*}
 	Making use of \eqref{a3}, \eqref{a4} and \eqref{b1}, we get
 	\begin{align*}
 	g(\Sigma({\mathbf{Z}_4},\mathbf{W_4}), \mathbf{FU_4})=(g(\Sigma(\mathbf{U_4}, \mathbf{W_4})), \Phi {\mathbf{Z}_4})+\mathbf{TU_4}(\ln \mathbf{\phi}_{2})g({\mathbf{Z}_4}, \mathbf{W_4}).
 	\end{align*}
 	Again $\mathbf{U_4},\mathbf{V_4}\in \Gamma(\mathfrak{D}^{\nu}\oplus\xi)\,\,\,\&\,\,\,\,{\mathbf{Z}_4}\in \Gamma(\mathfrak{D}^{\perp}).$ We derive
 	\begin{equation*}
 	g(\Sigma(\mathbf{U_4},\mathbf{V_4}),\Phi {\mathbf{Z}_4})=g(\tilde{\nabla}_{\mathbf{U_4}}\mathbf{V_4},\Phi {\mathbf{Z}_4})=g((\tilde{\nabla}_\mathbf{U_4}\Phi)\mathbf{V_4}, {\mathbf{Z}_4})-g(\tilde{\nabla}_\mathbf{U_4}\Phi \mathbf{V_4}, {\mathbf{Z}_4}).
 	\end{equation*}
 	The term $ g((\tilde{\nabla}_\mathbf{U_4}\Phi)\mathbf{V_4}, {\mathbf{Z}_4})=0$ using \eqref{a3}. Then by the orthogonality of vector fields, we obtain
 	\begin{equation*}
 	g(\Sigma(\mathbf{U_4},\mathbf{V_4}), \Phi {\mathbf{Z}_4})=g(\tilde{\nabla}_{\mathbf{U_4}}{\mathbf{Z}_4},\Phi\mathbf{V_4})=g(\nabla_{\mathbf{U_4}}{\mathbf{Z}_4},\mathbf{FV})+g(\tilde \nabla_{\mathbf{U_4}}{\mathbf{Z}_4},\mathbf{FV}),
 	\end{equation*}
 	Eqs \eqref{a5} and \eqref{b1} imply the following 
 	\begin{align*}
 	g(\Sigma(\mathbf{U_4}, \mathbf{V_4}),\Phi {\mathbf{Z}_4})=g(\Sigma(\mathbf{U_4}, {\mathbf{Z}_4}), \mathbf{FV}).
 	\end{align*}
 	Third is completed. For the last part, we have
 	\begin{equation*}
 	\begin{split}
 	g(\Sigma({\mathbf{X}_4}, \mathbf{V_4}), \mathbf{FU_4})&=g(\tilde{\nabla}_{\mathbf{V_4}}{\mathbf{X}_4},\Phi \mathbf{U_4})-g(\tilde{\nabla}_{\mathbf{V_4}}{\mathbf{X}_4}, \mathbf{TU_4}),\\
 	&=-g(\tilde{\nabla}_\mathbf{V_4}\Phi {\mathbf{X}_4}, \mathbf{U_4})+g((\tilde{\nabla}_{\mathbf{V_4}}\Phi){\mathbf{X}_4}, \mathbf{U_4})-\mathbf{V_4}(\ln \mathbf{\phi}_{1})g({\mathbf{X}_4}, \mathbf{TU_4}).
 	\end{split}
 	\end{equation*}
 	for any ${\mathbf{X}_4}\in \Gamma(\mathfrak{D})$ and $\mathbf{U_4},\mathbf{V_4}\in \Gamma(\mathfrak{D}^{\nu}\oplus\xi).$
 	Using \eqref{a3}, \eqref{b1}, we get \\
 	\begin{align*}
 	g(\Sigma({\mathbf{X}_4}, \mathbf{V_4}), \mathbf{FU_4})=0.
 	\end{align*}
 	So, the required results are obtained.
 \end{proof}
 If we interchange ${\mathbf{X}_4}$ by $\Phi {\mathbf{X}_4}$ and ${\mathbf{Y}_4}$ by $\Phi {\mathbf{Y}_4}$ in  (i) of Lemma $\ref{l33}$, then we find the following relations.
 \begin{align}\label{b5}
 g(\Sigma(\Phi {\mathbf{X}_4},{\mathbf{Y}_4}),\mathbf{FU_4})=\Big\{(\mathbf{U_4}\ln \mathbf{\phi}_{1})-\beta\eta(\mathbf{U_4})\Big\})g({\mathbf{X}_4}, {\mathbf{Y}_4})-\mathbf{TU_4}(\ln \mathbf{\phi}_{1})g({\mathbf{X}_4},\Phi {\mathbf{Y}_4}).
 \end{align}
 \begin{align}\label{b6}
 g(\Sigma({\mathbf{X}_4},\Phi {\mathbf{Y}_4}),\mathbf{FU_4})=-\Big\{(\mathbf{U_4}\ln \mathbf{\phi}_{1})-\beta\eta(\mathbf{U_4})\Big\}g({\mathbf{X}_4}, {\mathbf{Y}_4})+\mathbf{TU_4}(\ln \mathbf{\phi}_{1})g({\mathbf{X}_4},\Phi {\mathbf{Y}_4}).
 \end{align}
 and \\
 \begin{align}\label{b7}
 g(\Sigma(\Phi {\mathbf{X}_4},\Phi {\mathbf{Y}_4}), \mathbf{FU_4})=\Big\{(\mathbf{U_4}\ln \mathbf{\phi}_{1})-\beta\eta(\mathbf{U_4})\Big\}g({\mathbf{X}_4}, \Phi {\mathbf{Y}_4})+\mathbf{TU_4}(\ln \mathbf{\phi}_{1})g({\mathbf{X}_4}, {\mathbf{Y}_4}).
 \end{align}
 Similarly, we interchange $\mathbf{U_4}$ by $\mathbf{TU_4}$ in Lemma $\ref{l33}$ and in \eqref{b5}-\eqref{b7}, and using Theorem \ref{t21} and \eqref{b4}, we derive
 \begin{align}\label{b8}
 g(\Sigma({\mathbf{X}_4},{\mathbf{Y}_4}), \mathbf{FTU})=\mathbf{TU_4}(\ln \mathbf{\phi}_{1})g({\mathbf{X}_4},\Phi {\mathbf{Y}_4})-\cos^{2}\nu\Big(\mathbf{U_4}(\ln \mathbf{\phi}_{1})-\beta\eta(\mathbf{U_4})\Big)g({\mathbf{X}_4},{\mathbf{Y}_4})
 \end{align}
 \begin{align}\label{b9}
 g(\Sigma(\Phi {\mathbf{X}_4},{\mathbf{Y}_4}),\mathbf{FTU})=\mathbf{TU_4}(\ln \mathbf{\phi}_{1})g({\mathbf{X}_4},{\mathbf{Y}_4})+\cos^{2}\nu \Big(\mathbf{U_4}(\ln \mathbf{\phi}_{1})-\beta\eta(\mathbf{U_4})\Big)g({\mathbf{X}_4},\Phi {\mathbf{Y}_4})
 \end{align}
 \begin{align}\label{b10}
 g(\Sigma({\mathbf{X}_4},\Phi {\mathbf{Y}_4}), \mathbf{FTU})=-\mathbf{TU_4}(\ln \mathbf{\phi}_{1})g({\mathbf{X}_4},{\mathbf{Y}_4})-\cos^{2}\nu \Big(\mathbf{U_4}(\ln \mathbf{\phi}_{1})-\beta\eta(\mathbf{U_4})\Big)g({\mathbf{X}_4},\Phi {\mathbf{Y}_4}),
 \end{align}
 and 
 \begin{align}\label{b11}
 g(\Sigma(\Phi {\mathbf{X}_4},\Phi {\mathbf{Y}_4}), \mathbf{FTU})=\mathbf{TU_4}(\ln \mathbf{\phi}_{1})g({\mathbf{X}_4},\Phi {\mathbf{Y}_4})-\cos^{2}\nu\Big(\mathbf{U_4}(\ln \mathbf{\phi}_{1})-\beta\eta(\mathbf{U_4})\Big)g({\mathbf{X}_4}, {\mathbf{Y}_4}).
 \end{align}
 On the other hand, we interchange $\mathbf{U_4}$ by $\mathbf{TU_4}$ in Lemma $3.1(ii)$ and using \textit{Theorem 2.1} and \eqref{b4}, we obtain
 \begin{align}\label{b12}
 g(\Sigma({\mathbf{Z}_4},\mathbf{W_4}), \mathbf{FTU})=g(\Sigma(\mathbf{TU_4},\mathbf{W_4}),\Phi {\mathbf{Z}_4})-\cos^{2}\nu (\mathbf{V_4}(\ln \mathbf{\phi}_{2})-\eta (\mathbf{U_4}))g({\mathbf{Z}_4},\mathbf{W_4}).
 \end{align}
 \begin{lemma}
 	Let $\mathbf{L}=\mathbf{L}_{\nu}\times_{\mathbf{\phi}_{1}} \mathbf{L}_{\mathbf{UT}}\times_{\mathbf{\phi}_{2}}\mathbf{L}_{\perp}$ be a bi-warped product submanifold of a locally conformal almost cosymplectic manifold $\tilde{\mathbf{L}}$. Then, we have
 	\begin{align}\label{b13}
 	g(\Sigma({\mathbf{X}_4},\mathbf{V_4}),\Phi {\mathbf{Z}_4})=g(\Sigma({\mathbf{X}_4},{\mathbf{Z}_4}),\mathbf{FV})=0,
 	\end{align}
 	for any ${\mathbf{X}_4}\in \Gamma(\mathfrak{D}), ~\mathbf{V_4}\in \Gamma(\mathfrak{D}^{\nu})$ and ${\mathbf{Z}_4}\in \Gamma(\mathfrak{D}^{\perp}).$
 \end{lemma}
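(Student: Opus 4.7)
The plan is to reduce both sides to the vanishing of $g(\Sigma(X,V),\Phi Z)$ and then prove this single identity. The decisive maneuver is to exploit the symmetry of $\Sigma$ so that the covariant derivative falls on the base direction $V\in\mathfrak{D}^\nu$, where the bi-warped product connection formula \eqref{b1} combines cleanly with the invariance of $\mathfrak{D}$ under $\Phi$.

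For the vanishing of $g(\Sigma(X,V),\Phi Z)$, I begin with $\Sigma(X,V) = \Sigma(V,X) = \tilde{\nabla}_V X - \nabla_V X$. Since $\nabla_V X$ is tangent while $\Phi Z = FZ$ is normal (the latter because $Z\in\mathfrak{D}^\perp$ is anti-invariant), this gives $g(\Sigma(X,V),\Phi Z) = g(\tilde{\nabla}_V X, \Phi Z)$. The skew-adjointness of $\Phi$, which follows from \eqref{a1}--\eqref{a2}, rewrites the right side as $-g(\Phi\tilde{\nabla}_V X, Z)$. Next, \eqref{a3} gives $(\tilde{\nabla}_V \Phi)X = \beta\bigl(g(\Phi V,X)\xi - \eta(X)\Phi V\bigr)$, and both terms vanish: $g(\Phi V, X) = -g(V,\Phi X) = 0$ since $\Phi X\in\mathfrak{D}$ is orthogonal to $V\in\mathfrak{D}^\nu$, while $\eta(X) = 0$ as $X\in\mathfrak{D}$ is orthogonal to $\xi$. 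Hence $\Phi\tilde{\nabla}_V X = \tilde{\nabla}_V \Phi X$. Since $\Phi X\in\mathfrak{D}$ lies in a fiber while $V$ lies in the base, \eqref{b1} yields $\nabla_V\Phi X = (V\ln\phi_1)\Phi X$, which is orthogonal to $Z$; and $\Sigma(V,\Phi X)$ is normal, hence also orthogonal to the tangent vector $Z$. Therefore $g(\tilde{\nabla}_V \Phi X, Z) = 0$, so $g(\Sigma(X,V),\Phi Z) = 0$.

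For the remaining identity $g(\Sigma(X,Z),FV) = g(\Sigma(X,V),\Phi Z)$, I use \eqref{a5} and \eqref{a6} to write $g(\Sigma(X,Z),FV) = g(A_{FV}X, Z) = -g(\tilde{\nabla}_X FV, Z)$ (the normal-connection contribution drops since $Z$ is tangent). Decomposing $FV = \Phi V - TV$ and applying \eqref{a3} with $(\tilde{\nabla}_X\Phi)V = 0$ (by the same argument, since $g(\Phi X, V) = 0$ and $\eta(V) = 0$), the tangential terms $(V\ln\phi_1)\Phi X$ and $(TV\ln\phi_1)X$ produced by \eqref{b1} are orthogonal to $Z$, leaving only the term $g(\Phi\Sigma(X,V),Z) = -g(\Sigma(X,V),\Phi Z)$ by skew-adjointness. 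This yields the claimed equality, and combined with the previous step, both quantities are zero.

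The principal obstacle is bookkeeping: each expansion produces many cross-terms, and one must repeatedly check that pairings between vectors drawn from $\mathfrak{D}$, $\mathfrak{D}^\perp$, $\mathfrak{D}^\nu$ and their $\Phi$-images actually vanish. The only non-routine step is deploying the symmetry of $\Sigma$ at the outset so that the derivative direction lies in the base distribution $\mathfrak{D}^\nu$; this is precisely what makes the warped-product formula \eqref{b1} applicable in its cleanest form.
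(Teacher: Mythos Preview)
Your argument is correct and uses the same ingredients as the paper: the Gauss--Weingarten formulas, the structure equation \eqref{a3}, the bi-warped product relation \eqref{b1}, and the orthogonality of the distributions. The only difference is cosmetic: the paper first shows $g(\Sigma(X,Z),FV)=0$ by differentiating in the $Z$-direction and then derives the equality $g(\Sigma(X,Z),FV)=g(\Sigma(X,V),\Phi Z)$ by differentiating in the $X$-direction, whereas you first establish $g(\Sigma(X,V),\Phi Z)=0$ by differentiating in the base direction $V$ and then obtain the same equality.
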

 \begin{proof}
 	For any ${\mathbf{X}_4}\in \Gamma(\mathfrak{D}), ~\mathbf{V_4}\in \Gamma(\mathfrak{D}^{\nu})$ and ${\mathbf{Z}_4}\in \Gamma(\mathfrak{D}^{\perp}),$ we have
 	\begin{equation*}
 	\begin{split}
 	g(\Sigma({\mathbf{X}_4},{\mathbf{Z}_4}),\mathbf{FV})&=g(\tilde{\nabla}_{{\mathbf{Z}_4}}{\mathbf{X}_4},\Phi \mathbf{V_4})+g(\tilde{\nabla}_{{\mathbf{Z}_4}}\mathbf{TV}, {\mathbf{X}_4}),\\
 	&=g((\tilde{\nabla}_{\mathbf{Z}_4}\Phi){\mathbf{X}_4},\mathbf{V_4})-g(\tilde{\nabla}_{\mathbf{Z}_4}\Phi {\mathbf{X}_4}, \mathbf{V_4})+\mathbf{TV}(\ln \mathbf{\phi}_{2})g({\mathbf{X}_4}, {\mathbf{Z}_4}).
 	\end{split}
 	\end{equation*}
 	Using \eqref{a3}, \eqref{b1} and the orthogonality of vector fields, we get $g(\Sigma({\mathbf{X}_4},{\mathbf{Z}_4}),\mathbf{FV})=0,$ which is second equality. On the other hand, we have
 	\begin{equation*}
 	\begin{split}
 	g(\Sigma({\mathbf{X}_4},{\mathbf{Z}_4}),\mathbf{FV})&=g(\tilde{\nabla}_{{\mathbf{X}_4}}{\mathbf{Z}_4},\Phi \mathbf{V_4})+g(\tilde{\nabla}_{{\mathbf{X}_4}}\mathbf{TV},{\mathbf{Z}_4}),\\
 	&=g((\tilde{\nabla}_{\mathbf{X}_4}\Phi){\mathbf{Z}_4},\mathbf{V_4})-g(\tilde{\nabla}_{\mathbf{X}_4}\Phi {\mathbf{Z}_4}, \mathbf{V_4})+\mathbf{TV}(\ln \mathbf{\phi}_{1})g({\mathbf{X}_4}, {\mathbf{Z}_4}).
 	\end{split}
 	\end{equation*}
 	Again use of \eqref{a3}, \eqref{a5} and the orthogonality of vector fields, we get
 	$$g(\Sigma({\mathbf{X}_4},\mathbf{V_4}),\Phi {\mathbf{Z}_4})=g(\Sigma({\mathbf{X}_4},{\mathbf{Z}_4}),\mathbf{FV}).$$
 	Hence the claim.
 \end{proof}
 \section{Chen type inequality for Bi-Warped product immersions}
 In this section, we show how the squared norm of the second fundamental form and the bi-warped product's warping functions are related. We will conduct a double analysis.
\\
Specifically, we will show a Chen type inequality for Bi-Warped product immersions, in two different settings (\textit{Theorem 4.1} and \textit{Theorem 4.2}).
In this regard, let's start by considering the following orthogonal frame field to show our primary thesis and to be able to provide this first relationship.\\
 
 Let $\mathbf{L}=\mathbf{L}_{\nu}\times_{\mathbf{\phi}_{1}}\mathbf{L}_{\mathbf{T}}\times_{\mathbf{\phi}_{2}}\mathbf{L}_{\perp}$ be an $n$-dimensional bi-warped product submanifolds of a $(2n+1)$-dimensional locally conformal almost cosymplectic manifold $\tilde{\mathbf{L}}$ such that $\xi$ is tangent to the base manifold $\mathbf{L}_{\nu}.$ If the dimensions $\dim(\mathbf{L}_{\mathbf{T}})=n_{1}, \dim(\mathbf{L}_{\perp})=n_{2}$ and $\dim(\mathbf{L}_{\nu})=n_{3},$ then the orthogonal frames of the corresponding tangent spaces $\mathfrak{D}, \mathfrak{D}^{\perp}$ and $\mathfrak{D}^{\nu}$, respectively, are given by $\{e_{1},\dots,e_{p},e_{p+1}=\Phi e_{1},\dots,e_{n_{1}}=e_{2_{p}}=\Phi e_{p}\},~ \{e_{n_{1}+1}=\bar{e}_{1},\dots,e_{n_{1}+n_{2}}=\bar{e}_{n_{2}}\}$ and $\{e_{n_{1}+n_{2}+1}=e^{*}_{1},\dots,e_{n_{1}+n_{2}+q}=e^{*}_{q},e_{n_{1}+n_{2}+q+1}=e^{*}_{q+1}=\sec\nu \mathbf{T}e^{*}_{1},\dots,e_{n_{1}+n_{2}+2q}=e^{*}_{2q}=\sec\nu \mathbf{T}e^{*}_{q},e_{m}=e^{*}_{n_{3}}=e^{*}_{2q+1}=\xi \}.$ Then the orthonormal frame fields of the normal subbundles of $ \Phi \mathfrak{D}^{\perp}, F\mathfrak{D}^{\nu}$ and $\mu$, respectively, are $\{e_{m+1}=\tilde{e}_{1}=\Phi \bar{e}_{1},\dots,e_{m+n_{2}}=\tilde{e}_{n_{2}}=\Phi \bar{e}_{n_{2}}\},$ $\{e_{m+n_{2}+1}=\tilde{e}_{n_{2}+1}=\csc\nu \mathbf{\phi}e_{1}^{*},\dots,e_{m+n_{2}+q}=\tilde{e}_{m_{2}+q}=\csc\nu \mathbf{\phi}e^{*}_{q}, e_{m+n_{2}+q+1}=\tilde{e}_{n_{2}+q+1}=\csc\nu \sec\nu \mathbf{FT}e^{*}_{1},\dots,e_{m+n_{2}+n_{3}-1}=\tilde{e}_{n_{2}+n_{3}-1}=\csc\nu \sec\nu \mathbf{FT}e^{*}_{q}\}$ and $\{e_{m+n_{2}+n_{3}}=\tilde{e}_{n_{2}+n{3}},\dots,e_{2n+1}=\tilde{e}_{2_{(n-n_{2}-n_{3}+1)-n_{1}}}\}.$
 \begin{theorem}
 	Let $\mathbf{L}=\mathbf{L}_{\nu}\times_{\mathbf{\phi}_{1}}\mathbf{L}_{\mathbf{T}}\times_{\mathbf{\phi}_{2}}\mathbf{L}_{\perp}$ be a $\mathfrak{D}^{\perp}-\mathfrak{D}^{\nu}$ mixed totally geodesic bi-warped product submanifold of a locally conformal almost cosymplectic manifold $\tilde{\mathbf{L}}$ such that $\xi$ is tangent to $\mathbf{L}_{\nu}.$ Then
 	\begin{itemize}
 		\item [(i)] The squared norm of the second fundamental form $\Sigma$ of $\mathbf{L}$ satisfies
 		\begin{align}
 		||\Sigma||^{2}\geq&n_{1}\Big(1+\cot^2\nu\Big)\Big(1+\cos^{2}\nu\Big)\bigg(||\vec{\nabla}(\ln \mathbf{\phi}_{1})||^{2}-\beta^2\bigg)\notag\\
 		&+n_{2}\Big(\csc^2\nu-1\Big)\bigg(||\vec{\nabla}(\ln \mathbf{\phi}_{2})||^{2}-\beta^2\bigg)
 		\end{align}
 		where $n_{1}=\dim(\mathbf{L}_{\mathbf{T}}), n_{2}=\dim(\mathbf{L}_{\perp})$ and $\vec{\nabla}(\ln \mathbf{\phi}_{1})$ is the gradient of $\ln \mathbf{\phi}_{1}$ along $\mathbf{L}_{\mathbf{T}}$ and $\vec{\nabla}(\ln \mathbf{\phi}_{2})$ is the gradient of $\ln \mathbf{\phi}_{2}$ along $\mathbf{L}_{\perp}.$\\
 		\item [(ii)]  if the equality sign in $(i)$ holds identically, then $\mathbf{L}_ {\nu}$ is totally geodesic submanifold of $\tilde{\mathbf{L}}, \mathbf{L}_{\mathbf{T}}$  and $\mathbf{L}_\perp$ are totally umbilical submanifolds of $\tilde{\mathbf{L}}$. In addition, $\mathbf{L}$ is a completely $\mathfrak{D}^{\nu}$-geodesic submanifold of $\tilde{\mathbf{L}}$.
 	\end{itemize}
 \end{theorem}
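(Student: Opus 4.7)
The plan is to expand $\|\Sigma\|^{2}$ with respect to the adapted orthonormal frame introduced just before the statement, discard the manifestly non-negative terms that do not appear on the right hand side, and then evaluate the surviving contributions by means of Lemma \ref{l33}, the identities \eqref{b5}--\eqref{b12}, the mixed totally geodesic hypothesis, and \eqref{b4}. Concretely, I would first write
\begin{equation*}
\|\Sigma\|^{2}\;=\;\sum_{r}\|\Sigma^{r}\|^{2}
\;=\;\sum_{r\in F\mathfrak{D}^{\perp}}\|\Sigma^{r}\|^{2}+\sum_{r\in F\mathfrak{D}^{\nu}}\|\Sigma^{r}\|^{2}+\sum_{r\in\mu}\|\Sigma^{r}\|^{2},
\end{equation*}
drop the $\mu$-block because it is non-negative, and inside each of the remaining two blocks split the double sum over the tangential frame into the nine sub-blocks coming from the decomposition $\mathbf{TM}=\mathfrak{D}\oplus\mathfrak{D}^{\perp}\oplus(\mathfrak{D}^{\nu}\oplus\langle\xi\rangle)$. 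The mixed totally geodesic assumption kills the $(\mathfrak{D}^{\perp},\mathfrak{D}^{\nu}\oplus\langle\xi\rangle)$-cross terms, and item (iv) of Lemma \ref{l33} kills the $(\mathfrak{D},\mathfrak{D}^{\nu}\oplus\langle\xi\rangle)$-cross terms against $F(\mathfrak{D}^{\nu}\oplus\langle\xi\rangle)$; what survives are the $(\mathfrak{D},\mathfrak{D})$, $(\mathfrak{D}^{\perp},\mathfrak{D}^{\perp})$ and possibly $(\mathfrak{D}^{\nu},\mathfrak{D}^{\nu})$ diagonal contributions.

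Next I would evaluate these surviving diagonal contributions using the arsenal already built in Section~3. For the block of normal directions indexed by $F\mathfrak{D}^{\nu}\oplus\langle\xi\rangle$ paired against $(\mathfrak{D},\mathfrak{D})$, the identities \eqref{b8}--\eqref{b11} let one compute $|g(\Sigma(\Phi e_{a},\Phi e_{b}),F e^{\ast}_{c})|^{2}$-type terms in closed form, producing, after using the adapted frame $e_{p+a}=\Phi e_{a}$ and $e^{\ast}_{q+c}=\sec\nu\,\mathbf{T}e^{\ast}_{c}$ and the normal normalisation $\csc\nu\,\mathbf{\phi}e^{\ast}_{c}$ and $\csc\nu\sec\nu\,\mathbf{FT}e^{\ast}_{c}$, precisely the factor $(1+\cot^{2}\nu)(1+\cos^{2}\nu)$ multiplying $\bigl(e^{\ast}_{c}(\ln\phi_{1})\bigr)^{2}$; the $\xi$-row is handled separately via \eqref{b4}, which contributes the $-\beta^{2}$ correction after one writes $\|\vec\nabla(\ln\phi_{1})\|^{2}=\sum_{c=1}^{n_{3}-1}(e^{\ast}_{c}\ln\phi_{1})^{2}+\xi(\ln\phi_{1})^{2}$. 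Summing over the $n_{1}$ vectors of $\mathfrak{D}$ gives the first line of the inequality. An analogous computation on the $(\mathfrak{D}^{\perp},\mathfrak{D}^{\perp})$ block, using items (ii)--(iii) of Lemma \ref{l33} together with \eqref{b12} and once more \eqref{b4}, yields the factor $(\csc^{2}\nu-1)$ and the gradient of $\ln\phi_{2}$, supplying the second line.

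Finally, for the equality statement I would trace which terms were discarded: the $\mu$-component of $\Sigma$ (on every pair of tangent vectors), the off-diagonal cross blocks that were not already killed by the hypotheses, and the $(\mathfrak{D}^{\nu},\mathfrak{D}^{\nu})$ block against $F(\mathfrak{D}^{\nu}\oplus\langle\xi\rangle)$. The vanishing of the $(\mathfrak{D}^{\nu},\mathfrak{D}^{\nu})$-block says precisely that the $\mathbf{L}_{\nu}$-leaves are totally geodesic in $\tilde{\mathbf{L}}$, while the vanishing of $\Sigma$ along $\mathfrak{D}^{\nu}$ against \emph{any} tangent direction gives the $\mathfrak{D}^{\nu}$-geodesic claim. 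That $\mathbf{L}_{\mathbf{T}}$ and $\mathbf{L}_{\perp}$ become totally umbilical in $\tilde{\mathbf{L}}$ is then the standard consequence for fibres of a warped product once one knows that the restrictions of $\Sigma$ to these fibres are proportional to the metric with mean curvature dictated by $-\vec\nabla\ln\phi_{1}$ respectively $-\vec\nabla\ln\phi_{2}$; combined with the previously deduced vanishing of the $\mu$-component and the off-diagonal blocks this upgrades to umbilicity in $\tilde{\mathbf{L}}$ itself.

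The main obstacle I anticipate is the bookkeeping in the second paragraph: one must pair each normal frame vector (with its $\sec\nu$, $\csc\nu$ normalisations) with the correct tangential pair so that the trigonometric factors from \eqref{b8}--\eqref{b11} collapse to the neat $(1+\cot^{2}\nu)(1+\cos^{2}\nu)$, and simultaneously isolate the $\xi$-contribution so that the $-\beta^{2}$ terms appear with the right signs through \eqref{b4}. Everything else in the argument is a careful application of already-proved identities, but that combinatorial collapse is where sign or factor errors would be easiest to make.
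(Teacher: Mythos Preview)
Your proposal is correct and follows essentially the same route as the paper's own proof: expand $\|\Sigma\|^{2}$ in the adapted frame, drop the $\mu$-block and the diagonal/cross blocks for which no useful relation is available, kill the remaining cross terms via Lemma~\ref{l33}(iii)--(iv), Lemma~3.2 and the mixed totally geodesic hypothesis, and then evaluate only the $(\mathfrak{D},\mathfrak{D})$- and $(\mathfrak{D}^{\perp},\mathfrak{D}^{\perp})$-blocks against $F\mathfrak{D}^{\nu}$ using \eqref{b5}--\eqref{b12} together with \eqref{b4}. Your equality discussion likewise matches the paper's: trace the discarded terms back to obtain the orthogonality relations that force $\mathbf{L}_{\nu}$ totally geodesic, $\mathbf{L}_{\mathbf{T}}$ and $\mathbf{L}_{\perp}$ totally umbilical, and $\mathbf{L}$ $\mathfrak{D}^{\nu}$-geodesic.
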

 \begin{proof}
 	From \eqref{a7} and \eqref{a8}, we get
 	$$||\Sigma||^{2}=\sum_{i,j=1}^{m}g(\Sigma(e_{i},e_{j}),\Sigma(e_{i},e_{j}))=\sum_{r=m+1}^{2n+1}\sum_{i,j=1}^{m}g(\Sigma(e_{i},e_{j}),e_{r})^{2}.$$
 	The above relation can take the form:
 	\begin{align}\label{c1}
 	\begin{split}
 	||\Sigma||^{2}&=\sum_{r=1}^{n_{2}}\left(\sum_{i,j=1}^{m}g(\Sigma(e_{i},e_{j}),\tilde{e}_{r})^{2}\right)\\
 	&+\sum_{r=n_{2}+1}^{n_{2}+n_{3}-1}\left(\sum_{i,j=1}^{m}g(\Sigma(e_{i},e_{j}),\tilde{e}_{r})^{2}\right)\\
 	&+\sum_{r=n_{2}+n_{3}}^{2(n-n_{2}-n_{3}+1)-n_{1}}\left(\sum_{i,j=1}^{m}g(\Sigma(e_{i},e_{j}),\tilde{e}_{r})^{2}\right).
 	\end{split}
 	\end{align}

We derive $\mathfrak{D}, \mathfrak{D}^{\perp}$ and $\mathfrak{D}^{\nu},$ by using the constructed frame fields of $\mathfrak{D}, \mathfrak{D}^{\perp}$ and $\mathfrak{D}^{\nu},$, leaving the third $\mu$-components positive terms because we could not find any connection for bi-warped products in terms of the $\mu$-components.
 	\begin{align}\label{c2}
 	\begin{split}
 	||\Sigma||^{2} &\geq \sum_{r=1}^{n_{2}}\sum_{i,j=1}^{n_{1}}g(\Sigma(e_{i},e_{j}),\Phi \bar{e}_{r})^{2}+\sum_{r=1}^{n_{2}}\sum_{i,j=1}^{n_{2}}g(\Sigma(\bar{e}_{i},\bar{e}_{j}),\Phi \bar{e}_{r})^{2}\\
 	&+\sum_{r=1}^{n_{2}}\sum_{i,j=1}^{n_{3}}g(\Sigma(e_{i}^{*},e_{j}^{*}),\Phi \bar{e}_{r})^{2}+2\sum_{r=1}^{n_{2}}\sum_{i=1}^{n_{1}}\sum_{j=1}^{n_{2}}g(\Sigma(e_{i},\bar{e}_{j}),\Phi \bar{e}_{r})^{2}\\
 	&+2\sum_{r=1}^{n_{2}}\sum_{i=1}^{n_{1}}\sum_{j=1}^{n_{3}}g(h(e_{i},e_{j}^{*}),\Phi \bar{e}_{r})^{2}+2\sum_{r=1}^{n_{2}}\sum_{i=1}^{n_{1}}\sum_{j=1}^{n_{3}}g(\Sigma(\bar{e}_{i},e_{j}^{*}),\Phi \bar{e}_{r})^{2}\\
 	&+\sum_{r=n_{2}+1}^{n_{2}+n_{3}-1}\sum_{i,j=1}^{n_{1}}g(\Sigma(e_{i},e_{j}),\Phi \tilde
 	{e}_{r})^{2}+\sum_{r=n_{2}+1}^{n_{2}+n_{3}-1}\sum_{i,j=1}^{n_{2}}g(\Sigma(\bar{e}_{i},\bar{e}_{j}),\Phi \tilde{e}_{r})^{2}\\
 	&+\sum_{r=n_{2}+1}^{n_{2}+n_{3}-1}\sum_{i,j=1}^{n_{3}}g(h(e_{i}^{*},e_{j}^{*}),\Phi \tilde{e}_{r})^{2}+2\sum_{r=n_{2}+1}^{n_{2}+n_{3}-1}\sum_{i=1}^{n_{1}}\sum_{j=1}^{n_{2}}g(\Sigma(e_{i},\bar{e}_{j}),\Phi \tilde{e}_{r})^{2}\\
 	&+2\sum_{r=n_{2}+1}^{n_{2}+n_{3}-1}\sum_{i=1}^{n_{1}}\sum_{j=1}^{n_{3}}g(e_{i}^{*},e_{j}^{*}),\Phi \tilde{e}_{r})^{2}+2\sum_{r=n_{2}+1}^{n_{2}+n_{3}-1}\sum_{i=1}^{n_{2}}\sum_{j=1}^{n_{3}}g(\Sigma(\bar{e}_{i},e_{j}^{*}),\Phi \tilde{e}_{r})^{2}.
 	\end{split}
 	\end{align}
 	Using Lemma $3.1 (iii)$ with the mixed completely geodesic condition $\mathfrak{D}^{\perp}-\mathfrak{D}^{\nu}$, the third term in the right-hand side is identically zero. Similarly, the fifth and tenth terms vanish when Lemma $3.2$ is used, whereas the sixth and twelfth terms are zero when the $\mathfrak{D}^{\perp}-\mathfrak{D}^{\nu}$ mixed totally condition is used. Using Lemma $3.1$, the eleventh term is also zero (iv).  However, we were unable to identify a relationship for the first, second, fourth, and ninth terms in the right-hand side of the above inequality for these bi-warped products, so we will leave them positive. After that, just the seventh and eighth terms must be assessed, and the statement above can be expressed as
 
 	\begin{equation*}
 	\begin{split}
 	||\Sigma||^{2}&\geq \sum_{r=1}^{q}\sum_{i,j=1}^{n_{1}}g(\Sigma(e_{i},e_{j}),\csc\nu Fe_{r}^{*})^{2}+\sum_{r=1}^{q}\sum_{i,j=1}^{n_{1}}g(\Sigma(e_{i},e_{j}),\csc\nu \sec\nu  \mathbf{FT}e_{r}^{*})^{2}\\
 	&+\sum_{r=1}^{q}\sum_{i,j=1}^{n_{2}}g(\Sigma(\bar{e}_{i},\bar{e}_{j}),\csc\nu \mathbf{\phi}e_{r}^{*})^{2}+\sum_{r=1}^{q}\sum_{i,j=1}^{n_{2}}g(\Sigma(\bar{e}_{i},\bar{e}_{j}),\csc\nu \sec\nu  \mathbf{FT}e_{r}^{*})^{2}.
 	\end{split}
 	\end{equation*}
 	Using Lemma $3.1(i)$ and the relations \eqref{b5}-\eqref{b11} in first two terms and using Lemma $3.1(ii)$ and \eqref{b12} in last two terms, we get
 	\begin{equation*}
 	\begin{split}
 	||\Sigma||^{2} &\geq n_{1}\csc^{2}\nu (1+\sec^{2}\nu)\sum_{r=1}^{q}(\mathbf{T}e_{r}^{*}(\ln\mathbf{\phi}_{1}))^{2}+\beta n_{1}\csc^{2}\nu (1+\cos^{2}\nu)\sum_{r=1}^{q}(e_{r}^{*}(\ln\mathbf{\phi}_{1})-\eta(e_{r}^{*}))^{2}\\
 	&+ n_{2}\csc^{2}\nu \sum_{r=1}^{q}(\mathbf{T}e_{r}^{*}(\ln\mathbf{\phi}_{2}))^{2}+\beta n_{1}\csc^{2}\nu \cos^{2}\nu\sum_{r=1}^{q}(e_{r}^{*}(\ln\mathbf{\phi}_{2})-\eta(e_{r}^{*}))^{2}\\
 	&=n_{1}\csc^{2}\nu (1+\sec^{2}\nu)\sum_{r=1}^{2q+1}(\mathbf{T}e_{r}^{*}(\ln\mathbf{\phi}_{1}))^{2}- n_{1}\csc^{2}\nu (1+\sec^{2}\nu)\sum_{r=q+1}^{2q}g(e_{r}^{*},\mathbf{T}\vec{\nabla}(ln\mathbf{\phi}_{1}))^{2}\\
 	&-n_{1}\csc^{2}\nu (1+\sec^{2}\nu)(\mathbf{T}e_{2q+1}^{*}(\ln\mathbf{\phi}_{1}))^{2}+\beta n_{1}\csc^{2}\nu (1+\cos^{2}\nu)\sum_{r=1}^{q}(e_{r}^{*}(\ln\mathbf{\phi}_{1})^{2}\\
 	&+n_{2}\csc^{2}\nu \sum_{r=1}^{2q+1}(\mathbf{T}e^{*}_{r}(\ln\mathbf{\phi}_{2}))^{2}-n_{2}\csc^{2}\nu \sum_{r=q+1}^{2q}g(e^{*}_{r},\mathbf{T}\vec{\nabla}(\ln\mathbf{\phi}_{2}))^{2}\\
 	&-n_{2}\csc^{2}\nu (\mathbf{T}e^{*}_{2q+1}(ln\mathbf{\phi}_{2}))^{2}+\beta n_{2}\csc^{2}\nu \cos^{2}\nu \sum_{r=1}^{q}(e^{*}_{r}(\ln\mathbf{\phi}_{2}))^{2}.\\
 	\end{split}
 	\end{equation*}
 	The third and seventh terms in the right-hand side of the last relation vanish identically because $e^{*}_{2q+1}=\xi$ and $\mathbf{T}\xi=0$. As a result, using \eqref{a9}, the aforementioned inequality has the form
 	
 	\begin{equation*}
 	\begin{split}
 	||\Sigma||^{2}&\geq n_{1}\csc^{2}\nu (1+\sec^{2}\nu)||\mathbf{T}\vec{\nabla}(\ln\mathbf{\phi}_{1})||^{2}-n_{1}\csc^{2}\nu (1+\sec^{2}\nu)\sec^{2}\nu \sum_{r=1}^{q}g(\mathbf{T}e^{*}_{r},\mathbf{T}\vec{\nabla}(\ln \mathbf{\phi}_{1}))^{2}\\
 	&+\beta n_{1}\csc^{2}\nu(1+\cos^{2}\nu)\sum_{r=1}^{q}(e^{*}_{r}(\ln\mathbf{\phi}_{1}))^{2}+n_{2}\csc^{2}\nu||\mathbf{T}\vec{\nabla}(\ln\mathbf{\phi}_{2})||^{2}+ \\ &-n_{2}\csc^{2}\nu \sec^{2}\nu \sum_{r=1}^{q}g(\mathbf{T}e_{r}^{*},\mathbf{T}\vec{\nabla}(\ln\mathbf{\phi}_{2}))^{2}+\beta n_{2}\csc^{2}\nu \cos^{2}\nu \sum_{r=1}^{q}(e_{r}^{*}(\ln\mathbf{\phi}_{2}))^{2}.
 	\end{split}
 	\end{equation*}
 	Using \eqref{a9}, \eqref{a12} and the fact that $\xi(\ln f_{i})=\beta,$ $i=1,2$ (\textit{Theorem 3.2}), we get
 	$$||\Sigma||^{2}\geq \beta n_{1}\csc^{2}\nu (1+\cos^{2}\nu)(||\vec{\nabla}(\ln\mathbf{\phi}_{1})||^{2}-1)+\beta n_{2}\cot^{2}\nu(||\vec{\nabla}(\ln\mathbf{\phi}_{2})||^{2}-1),$$
 	which is inequality $(i).$ \\
 	
 	Now, we discuss the equality case, from the leaving third $\mu$-components terms in the right-hand side of \eqref{c1}, we have
 	\begin{align}\label{c3}
 	\Sigma(TL,TL)\perp \mu. 
 	\end{align}
 	We may deduce from the first, second, and fourth terms in \eqref{c2} that
 	\begin{align}\label{c4}
 	\Sigma(\mathfrak{D},\mathfrak{D})\perp \Phi \mathfrak{D}^{\perp},\notag\\ \Sigma(\mathfrak{D}^{\perp},\mathfrak{D}^{\perp})\perp \Phi \mathfrak{D}^{\perp},\notag\\ \Sigma(\mathfrak{D},\mathfrak{D}^{\perp})\perp \Phi \mathfrak{D}^{\perp}.
 	\end{align}
 	The leaving ninth term of \eqref{c2}, we get 
 	\begin{align}\label{c5}
 	\Sigma(\mathfrak{D}^{\nu},\mathfrak{D}^{\nu})\perp F\mathfrak{D}^{\nu}.
 	\end{align}
 	Then \eqref{c3} and \eqref{c4}, give the following
 	\begin{align}\label{c6}
 	\Sigma(\mathfrak{D},\mathfrak{D})\in \mathbf{\phi} \mathfrak{D}^{\nu},~ \Sigma(\mathfrak{D}^{\perp},\mathfrak{D}^{\perp})\in \mathbf{\phi} \mathfrak{D}^{\nu},~ \Sigma(\mathfrak{D},\mathfrak{D}^{\perp})\in \mathbf{\phi}\mathfrak{D}^{\nu}.
 	\end{align}
 	As $\mathbf{L}$ is $\mathfrak{D}^{\perp}-\mathfrak{D}^{\nu}$ mixed totally geodesic, we conclude
 	\begin{align}\label{c7}
 	\Sigma(\mathfrak{D}^{\perp},\mathfrak{D}^{\nu})=0.
 	\end{align}
 	From the deleting third term of \eqref{c2}, we arrive
 	\begin{align}\label{c8}
 	\Sigma(\mathfrak{D}^{\nu},\mathfrak{D}^{\nu})\perp \Phi \mathfrak{D}^{\perp}.
 	\end{align}
 	From \eqref{c3}, \eqref{c5} and \eqref{c8}, we get 
 	\begin{align}\label{c9}
 	\Sigma(\mathfrak{D}^{\nu},\mathfrak{D}^{\nu})=0.
 	\end{align}
 	Similarly, from the deleting fifth, tenth and eleventh terms of \eqref{c2}, we find, respectively,
 	\begin{align}\label{c10}
 	\Sigma(\mathfrak{D},\mathfrak{D}^{\nu})\perp \Phi \mathfrak{D}^{\perp},~ \Sigma(\mathfrak{D},\mathfrak{D}^{\perp})\perp \mathbf{\phi}\mathfrak{D}^{\nu},~ \Sigma(\mathfrak{D},\mathfrak{D}^{\nu})\perp \mathbf{\phi} \mathfrak{D}^{\nu}.
 	\end{align}
 	Thus, \eqref{c3}, \eqref{c4} and \eqref{c10}, we obtain
 	\begin{align}\label{c11}
 	\Sigma(\mathfrak{D},\mathfrak{D}^{\nu})=0,~~\Sigma(\mathfrak{D},\mathfrak{D}^{\perp})=0.
 	\end{align}
 	Since $\mathbf{L}_{\mathbf{T}}$ is a totally geodesic submanifold of $\mathbf{L}.$ Combining with \eqref{c7}, \eqref{c9} and \eqref{c11}, we reached that $\mathbf{L}_{\nu}$ is totally geodesic in $\tilde{\mathbf{L}}$. \\
 	
 	On the other hand, since $\mathbf{L}_{\mathbf{T}}$ and $\mathbf{L}_{\perp}$ are totally umbilical, using \eqref{c6}, we conclude that $\mathbf{L}_{\mathbf{T}}$ and $\mathbf{L}{\perp}$ are totally umbilical submanifolds of $\tilde{\mathbf{L}}$, which yields to $(ii).$ \\
 	
 	Moreover, all conditions including \eqref{c4}-\eqref{c11} imply that $\mathbf{L}$ is a $\mathfrak{D}^{\nu}$-totally geodesic submanifold of $\tilde{\mathbf{L}}$.
 \end{proof}

 \begin{lemma}
 	Let $\mathbf{L}=\mathbf{L}_{\mathbf{T}}\times_{\mathbf{\phi}_{1}} \mathbf{L}_{\perp} \times _{\mathbf{\phi}_{2}} \mathbf{L}_{\nu}$ be a bi-warped product submanifold of locally conformal almost cosymplectic manifold $\tilde{\mathbf{L}}$ such that $\xi \in (\mathfrak{D}).$ Then, we have
 	\begin{itemize}
 		\item [(i)] $g(\Sigma({\mathbf{X}_4},{\mathbf{Y}_4}),\Phi \mathbf{V_4})=0;$
 		\item[(ii)] $g(\Sigma({\mathbf{X}_4},\mathbf{V_4}),\Phi \mathbf{U_4})=-\Phi {\mathbf{X}_4}(\ln\mathbf{\phi}_{1})g(\mathbf{U_4},\mathbf{V_4}).$
 	\end{itemize}
 	for any ${\mathbf{X}_4},{\mathbf{Y}_4}\in \mathfrak{D}$ and $\mathbf{U_4},\mathbf{V_4}\in \mathfrak{D}^{\perp}.$
 \end{lemma}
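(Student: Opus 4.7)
The plan is to establish both identities by the standard contact-geometric manipulation: re-express $g(\Sigma(\cdot,\cdot),\Phi\cdot)$ as an inner product involving the ambient covariant derivative of $\Phi$, use the structure equation \eqref{a3} to discard the $(\tilde\nabla\Phi)$-terms, and then exploit the bi-warped connection formula \eqref{b1} to extract the warping-function derivatives. Throughout I will freely use the skew-symmetry $g(\Phi\cdot,\cdot)=-g(\cdot,\Phi\cdot)$, which follows from \eqref{a1} and \eqref{a2}.

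For (i), I would begin from $g(\Sigma(X,Y),\Phi V) = g(\tilde\nabla_X Y,\Phi V)$, valid because $\Phi V$ is normal (as $V\in\mathfrak{D}^\perp$ is anti-invariant). Skew-symmetry together with the expansion $\Phi\tilde\nabla_X Y = \tilde\nabla_X\Phi Y - (\tilde\nabla_X\Phi)Y$ reduces the problem to showing that $g(\tilde\nabla_X\Phi Y,V)$ and $g((\tilde\nabla_X\Phi)Y,V)$ both vanish. The second collapses by \eqref{a3}: the hypothesis $\xi\in\mathfrak{D}$ yields $\eta(V)=0$, and $\Phi X\in\mathfrak{D}$ is orthogonal to $V\in\mathfrak{D}^\perp$. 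For the first, differentiating the orthogonality $g(\Phi Y,V)\equiv 0$ gives $g(\tilde\nabla_X\Phi Y,V)=-g(\Phi Y,\tilde\nabla_X V)$; expanding $\tilde\nabla_X V$ by Gauss and \eqref{b1} as $(X\ln\phi_1)V+\Sigma(X,V)$, each summand is orthogonal to the tangent vector $\Phi Y\in\mathfrak{D}$ (the first is in $\mathfrak{D}^\perp$, the second is normal).

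For (ii), the key step is to use the symmetry of $\Sigma$ to rewrite $g(\Sigma(X,V),\Phi U) = g(\tilde\nabla_V X,\Phi U)$ and then apply the same two moves. Skew-symmetry and the $\tilde\nabla\Phi$-expansion produce a $(\tilde\nabla_V\Phi)X$-term which vanishes via \eqref{a3}: $\Phi V$ is normal, so $g(\Phi V,X)=g(\Phi V,U)=0$, and the $\eta(X)\Phi V$ piece pairs against the tangent $U$ trivially. What remains is $-g(\tilde\nabla_V\Phi X,U)$. Since $\Phi X\in\mathfrak{D}$ is tangent to the base $\mathbf{L}_{\mathbf{T}}$ and $V\in\mathfrak{D}^\perp$ sits in the fiber with warping function $\phi_1$, formula \eqref{b1} yields $\nabla_V\Phi X=(\Phi X\ln\phi_1)V$, and the normal part $\Sigma(V,\Phi X)$ is killed when paired with the tangent vector $U$. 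This gives exactly $-\Phi X(\ln\phi_1)g(U,V)$, as required.

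The only mild obstacle is keeping track of which $(\tilde\nabla\Phi)$-contributions vanish; all of them do, but the argument uses in an essential way that $\xi\in\mathfrak{D}$, which forces $\eta|_{\mathfrak{D}^\perp}=0$ and makes $\Phi$-images of anti-invariant vectors normal. Apart from that, the derivation is routine bookkeeping with Gauss--Weingarten and the bi-warped connection identity \eqref{b1}, entirely parallel in structure to the computations already carried out in Lemma \ref{l33}.
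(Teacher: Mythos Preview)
Your argument is correct and follows essentially the same route as the paper: for (ii) you rewrite $g(\Sigma(X,V),\Phi U)=g(\tilde\nabla_V X,\Phi U)=g((\tilde\nabla_V\Phi)X,U)-g(\tilde\nabla_V\Phi X,U)$, kill the first term via \eqref{a3} and orthogonality, and evaluate the second via \eqref{b1}, exactly as the paper does. For (i) the paper merely states that the result is ``straightforward'' from Gauss--Weingarten, \eqref{a3} and \eqref{b1}; your write-up supplies precisely those details in the same spirit.
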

 \begin{proof}
 	The first component is straightforward to prove using Gauss-Weingarten formulae and the structure equation of a locally conformal almost cosymplectic manifold with \eqref{b1}. Now, we get
 	(ii) for any ${\mathbf{X}_4}\in\mathfrak{D}$ and $\mathbf{U_4},\mathbf{V_4} \in \mathfrak{D}^{\perp},$
 	\begin{equation*}
 	\begin{split}
 	g(\Sigma({{\mathbf{X}_4}},\mathbf{V_4}),\Phi \mathbf{U_4})&=g(\tilde{\nabla}_{\mathbf{V_4}}{\mathbf{X}_4},\Phi\mathbf{U_4})\\
 	&=g((\tilde{\nabla}_{\mathbf{V_4}}\Phi){\mathbf{X}_4},\mathbf{U_4})-g(\tilde{\nabla}_{\mathbf{V_4}}\Phi \mathbf{L}{\mathbf{X}_4},\mathbf{U_4}).
 	\end{split}
 	\end{equation*}
 	Now, using \eqref{a3} and \eqref{b1} and the perpendicularity of the vector fields, we get 
 	\begin{align*}
 	g(\Sigma({\mathbf{X}_4},\mathbf{V_4}),\Phi \mathbf{U_4})=-\Phi {\mathbf{X}_4}(\ln
 	\mathbf{\phi}_{1})g(\mathbf{U_4},\mathbf{L}V).
 	\end{align*}
 	Hence the claim.
 \end{proof}
 \begin{lemma}
 	Let $\mathbf{L}=\mathbf{L}_{\mathbf{T}}\times_{\mathbf{\phi}_{1}} \mathbf{L}_{\perp} \times _{\mathbf{\phi}_{2}} \mathbf{L}_{\nu}$ be a bi-warped product submanifold of locally conformal almost cosymplectic manifold $\tilde{\mathbf{L}}$ such that $\xi \in (\mathfrak{D}).$ Then, we have
 	\begin{itemize}
 		\item [(i)] $g(\Sigma({\mathbf{X}_4},{\mathbf{Y}_4}),\mathbf{FZ})=0;$
 		\item[(ii)] $g(\Sigma({\mathbf{X}_4},{\mathbf{Z}_4}),\mathbf{FW})=-\Phi {\mathbf{X}_4}(\ln\mathbf{\phi}_{2})g({\mathbf{Z}_4},\mathbf{W_4})+\Big\{{\mathbf{X}_4}(\ln\mathbf{\phi}_{2})-\beta \eta({\mathbf{X}_4})\Big\}g(\mathbf{TZ_4},\mathbf{W_4}).$
 	\end{itemize}
 	for any ${\mathbf{X}_4},{\mathbf{Y}_4}\in \mathfrak{D}$ and ${\mathbf{Z}_4},\mathbf{W_4}\in \mathfrak{D}^{\nu}.$
 \end{lemma}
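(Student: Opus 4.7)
The plan is to prove both parts by the standard trio of tools used throughout the previous lemmas: the Gauss formula \eqref{a4}, the covariant-derivative identity \eqref{a3} for $\Phi$, and the bi-warped product connection relation \eqref{b1}. Since $\Sigma$ is symmetric and normal-valued, I will freely replace $\tilde{\nabla}_X Y$ by $\tilde{\nabla}_Y X$ whenever it gives a cleaner warping-function interpretation, and use the identity $g(\Phi A, B) = -g(A, \Phi B)$, which follows from \eqref{a2} together with $\eta\circ\Phi = 0$.

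For part (i), I start from the decomposition $FZ = \Phi Z - TZ$ and write
\begin{equation*}
g(\Sigma(\mathbf{X}_4,\mathbf{Y}_4),\mathbf{FZ}) \;=\; g(\tilde{\nabla}_{\mathbf{X}_4}\mathbf{Y}_4, \Phi \mathbf{Z}_4) \;-\; g(\nabla_{\mathbf{X}_4}\mathbf{Y}_4, \mathbf{TZ}).
\end{equation*}
The second piece is zero because $\mathbf{X}_4,\mathbf{Y}_4 \in \Gamma(\mathfrak{D})$ are tangent to the base $\mathbf{L}_{\mathbf{T}}$, so $\nabla_{\mathbf{X}_4}\mathbf{Y}_4$ is also tangent to $\mathbf{L}_{\mathbf{T}}$, while $\mathbf{TZ} \in \Gamma(\mathfrak{D}^{\nu})$ is tangent to the fiber $\mathbf{L}_{\nu}$. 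For the first piece, I apply skew-adjointness of $\Phi$, then $(\tilde{\nabla}_{\mathbf{X}_4}\Phi)\mathbf{Y}_4 = \tilde{\nabla}_{\mathbf{X}_4}\Phi\mathbf{Y}_4 - \Phi\tilde{\nabla}_{\mathbf{X}_4}\mathbf{Y}_4$. The $(\tilde{\nabla}\Phi)$ term vanishes against $\mathbf{Z}_4$ by \eqref{a3} because both $\Phi\mathbf{X}_4 \in \mathfrak{D}$ and $\xi \in \mathfrak{D}$ are orthogonal to $\mathbf{Z}_4 \in \mathfrak{D}^{\nu}$. The remaining term $-g(\tilde{\nabla}_{\mathbf{X}_4}\Phi\mathbf{Y}_4, \mathbf{Z}_4) = -g(\nabla_{\mathbf{X}_4}\Phi\mathbf{Y}_4, \mathbf{Z}_4)$ also vanishes because $\Phi\mathbf{Y}_4 \in \mathfrak{D}$, so $\nabla_{\mathbf{X}_4}\Phi\mathbf{Y}_4$ stays tangent to the base.

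For part (ii), I proceed by writing $g(\Sigma(\mathbf{X}_4,\mathbf{Z}_4),\mathbf{FW}) = g(\tilde{\nabla}_{\mathbf{Z}_4}\mathbf{X}_4, \Phi\mathbf{W}_4) - g(\tilde{\nabla}_{\mathbf{Z}_4}\mathbf{X}_4, \mathbf{TW})$. The second term is handled directly by \eqref{b1}: since $\mathbf{X}_4$ lies in the base and $\mathbf{Z}_4$ in the $\phi_2$-fiber, $\nabla_{\mathbf{Z}_4}\mathbf{X}_4 = \mathbf{X}_4(\ln\phi_2)\mathbf{Z}_4$, giving $-g(\tilde{\nabla}_{\mathbf{Z}_4}\mathbf{X}_4,\mathbf{TW}) = -\mathbf{X}_4(\ln\phi_2)g(\mathbf{Z}_4,\mathbf{TW}) = \mathbf{X}_4(\ln\phi_2)g(\mathbf{TZ}_4,\mathbf{W}_4)$. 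For the first term, I use skew-adjointness of $\Phi$ and then \eqref{a3}; here the key point is that $g(\Phi\mathbf{Z}_4, \mathbf{X}_4) = 0$ but $\eta(\mathbf{X}_4)$ need not vanish since $\xi \in \mathfrak{D}$, producing the contribution $-\beta\eta(\mathbf{X}_4)g(\mathbf{TZ}_4,\mathbf{W}_4)$. What remains is $-g(\tilde{\nabla}_{\mathbf{Z}_4}\Phi\mathbf{X}_4, \mathbf{W}_4) = -g(\nabla_{\mathbf{Z}_4}\Phi\mathbf{X}_4, \mathbf{W}_4)$, and applying \eqref{b1} with $\Phi\mathbf{X}_4 \in \mathfrak{D}$ yields exactly $-\Phi\mathbf{X}_4(\ln\phi_2)g(\mathbf{Z}_4,\mathbf{W}_4)$.

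The only subtle step, which is where I expect mild care to be required, is bookkeeping the $\xi$-contributions from \eqref{a3}: because $\xi$ is assumed tangent to $\mathfrak{D}$ here (rather than to $\mathfrak{D}^{\nu}$ as in the previous section), the term $\beta\eta(\mathbf{X}_4)\Phi\mathbf{Z}_4$ in $(\tilde{\nabla}_{\mathbf{Z}_4}\Phi)\mathbf{X}_4$ survives and is responsible for the $-\beta\eta(\mathbf{X}_4)g(\mathbf{TZ}_4,\mathbf{W}_4)$ summand in the final formula, while the $\beta g(\Phi\mathbf{Z}_4,\mathbf{X}_4)\xi$ term drops out by orthogonality. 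Aside from tracking these signs and the tangential/normal splittings, the entire argument reduces to assembling the three ingredients (a3), (a4), (b1) in order.
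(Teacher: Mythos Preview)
Your proposal is correct and follows essentially the same route as the paper: both arguments decompose $\mathbf{FZ}=\Phi\mathbf{Z}-\mathbf{TZ}$, apply the Gauss formula together with \eqref{a3}, and finish with the bi-warped connection relation \eqref{b1}. The only cosmetic difference is that for (i) you invoke total geodesicity of the base $\mathbf{L}_{\mathbf T}$ directly to kill $g(\nabla_{\mathbf X_4}\mathbf Y_4,\mathbf{TZ})$ and $g(\nabla_{\mathbf X_4}\Phi\mathbf Y_4,\mathbf Z_4)$, whereas the paper instead rewrites these via \eqref{b1} as $g(\Phi\mathbf Y_4,\nabla_{\mathbf X_4}\mathbf Z_4)$ and $g(\mathbf Y_4,\nabla_{\mathbf X_4}\mathbf{TZ})$; your tracking of the $-\beta\eta(\mathbf X_4)$ contribution in (ii) is in fact cleaner than the paper's own presentation.
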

 \begin{proof}
 	For any ${\mathbf{Y}_4}\in \mathfrak{D}$ and ${\mathbf{Z}_4}\in \mathfrak{D}^{\nu}.$
 	\begin{align}\label{d1}
 	g(\Sigma({\mathbf{X}_4},{\mathbf{Y}_4}),\mathbf{FZ})&=g(\tilde{\nabla}_{{\mathbf{X}_4}}{\mathbf{Y}_4},\mathbf{FZ})\notag\\
 	&=g(\tilde{\nabla}_{{\mathbf{X}_4}}{\mathbf{Y}_4},\Phi {\mathbf{Z}_4})-g(\tilde{\nabla}_{{\mathbf{X}_4}}{\mathbf{Y}_4},\mathbf{PZ}).
 	\end{align}
 	Since $g({\mathbf{Y}_4}, \mathbf{TZ_4})=g({\mathbf{Y}_4},\Phi {\mathbf{Z}_4})=-g(\Phi {\mathbf{Y}_4}, {\mathbf{Z}_4})=0,$
 	from \eqref{d1} and \eqref{a3}, we obtain 
 	\begin{align*}
 	g(\Sigma({\mathbf{X}_4},{\mathbf{Y}_4}), \mathbf{FZ})&=\beta (g(\Phi {\mathbf{Y}_4}, \tilde{\nabla}_{{\mathbf{X}_4}}{\mathbf{Z}_4})+g({\mathbf{Y}_4},\tilde{\nabla}_{{\mathbf{X}_4}}\mathbf{PZ}))\\
 	&=\beta(g(\Phi {\mathbf{Y}_4},\nabla_{{\mathbf{X}_4}}{\mathbf{Z}_4})+g({\mathbf{Y}_4},\nabla_{{\mathbf{X}_4}}\mathbf{PZ})).
 	\end{align*}
 	Hence using \eqref{b1}, we get the desired results. Again for any ${\mathbf{X}_4},{\mathbf{Y}_4}\in \mathfrak{D}$ and ${\mathbf{Z}_4},\mathbf{W_4}\in \mathfrak{D}^{\nu}.$
 	\begin{equation*}
 	\begin{split}
 	g(\Sigma({\mathbf{X}_4},{\mathbf{Z}_4}),\mathbf{FW})&=g(\tilde{\nabla}_{{\mathbf{Z}_4}}{\mathbf{X}_4},\Phi \mathbf{W_4})-g(\tilde{\nabla}_{{\mathbf{Z}_4}}{\mathbf{X}_4}, \mathbf{TW})\\
 	&=g(\Phi \tilde{\nabla}_{{\mathbf{Z}_4}}{\mathbf{X}_4},\mathbf{W_4})-g(\nabla_{{\mathbf{Z}_4}}{\mathbf{X}_4}, \mathbf{TW}).
 	\end{split}
 	\end{equation*}
 	By using the co-variant derivative property of $\Phi$ and \eqref{b1}, we get 
 	\begin{align*}
 	g(\Sigma({\mathbf{X}_4},{\mathbf{Z}_4}),\mathbf{FW})=g((\tilde{\nabla}_{{\mathbf{Z}_4}}\Phi){\mathbf{X}_4},{\mathbf{X}_4}\mathbf{W_4})-g(\tilde{\nabla}_{{\mathbf{Z}_4}}\Phi {\mathbf{X}_4},\mathbf{W_4})-{\mathbf{X}_4}(\ln
 	\mathbf{\phi}_{2})g({\mathbf{Z}_4}, \mathbf{TW}).
 	\end{align*}
 	Applying \eqref{a3} and \eqref{b1}, we obtained the required results.
 \end{proof}
 Interchanging the following relations yields the following results $X$ by $\Phi {\mathbf{X}_4}$, ${\mathbf{Z}_4}$ by $\mathbf{TZ_4}$ and $\mathbf{W_4}$ by $\mathbf{TW_4}$ in Lemma $(4.1),$ \\
 \begin{align}\label{d2}
 g(\Sigma(\Phi {\mathbf{X}_4},{\mathbf{Z}_4}),\mathbf{FW_4})&=\Big\{{\mathbf{X}_4}(\ln \mathbf{\phi}_{2})-\beta \eta({\mathbf{X}_4})\Big\}g({\mathbf{Z}_4},\mathbf{W_4})+\Phi {\mathbf{X}_4}(\ln \mathbf{\phi}_{2})g(\mathbf{TZ_4}, \mathbf{W_4})\\
 \label{d3}
 g(\Sigma({\mathbf{X}_4}, \mathbf{TZ_4}),\mathbf{FW_4})&=-\Phi {\mathbf{X}_4}(\ln \mathbf{\phi}_{2})g(\mathbf{TZ_4}, \mathbf{W_4})-\Big\{{\mathbf{X}_4}(\ln \mathbf{\phi}_{2})-\beta \eta ({\mathbf{X}_4})\Big\}\cos^{2}\nu g({\mathbf{Z}_4},\mathbf{W_4})\\
 \label{d4}
 g(\Sigma({\mathbf{X}_4},{\mathbf{Z}_4}),\mathbf{FTW_4})&=-\Phi {\mathbf{X}_4}(\ln \mathbf{\phi}_{2})g({\mathbf{Z}_4}, \mathbf{TW_4})-\Big\{{\mathbf{X}_4}(\ln \mathbf{\phi}_{2})-\beta \eta ({\mathbf{X}_4})\Big\}\cos^{2}\nu g({\mathbf{Z}_4},\mathbf{W_4})\\
 \label{d5}
 g(\Sigma(\Phi {\mathbf{X}_4},\mathbf{TZ_4}),\mathbf{FW_4})&=\Big\{{\mathbf{X}_4}(\ln \mathbf{\phi}_{2})-\beta \eta ({\mathbf{X}_4})\Big\}g(\mathbf{TZ_4},\mathbf{W_4})-\Phi {\mathbf{X}_4} (\ln \mathbf{\phi}_{2})\cos^{2}\nu g({\mathbf{Z}_4},\mathbf{W_4})\\
 \label{d6}
 g(\Sigma(\Phi {\mathbf{X}_4},{\mathbf{Z}_4}),\mathbf{FTW_4})&=\Big\{{\mathbf{X}_4}(\ln \mathbf{\phi}_{2})-\beta \eta ({\mathbf{X}_4})\Big\}g({\mathbf{Z}_4},\mathbf{TW_4})+\Phi {\mathbf{X}_4} (\ln \mathbf{\phi}_{2})\cos^{2}\nu g({\mathbf{Z}_4},\mathbf{W_4})\\
 \label{d7}
 g(\Sigma({\mathbf{X}_4},\mathbf{TZ_4}),\mathbf{FTW_4})&=-\Phi{\mathbf{X}_4}(\ln \mathbf{\phi}_{2})\cos^{2}\nu g({\mathbf{Z}_4},\mathbf{W_4})\notag\\
 &-\Big\{{\mathbf{X}_4}(\ln \mathbf{\phi}_{2})-\beta \eta ({\mathbf{X}_4})\Big\}\cos^{2}\nu g(\mathbf{TZ_4}, \mathbf{W_4})\\
 \label{d8}
 g(\Sigma(\Phi {\mathbf{X}_4}, \mathbf{TZ_4}),\mathbf{FTW_4})&=\Big\{{\mathbf{X}_4}(\ln \mathbf{\phi}_{2})-\beta \eta ({\mathbf{X}_4})\Big\}\cos^{2}\nu g({\mathbf{Z}_4}, \mathbf{W_4}).
 \end{align}
 \begin{lemma}
 	Let $\mathbf{L}=\mathbf{L}_{\mathbf{T}}\times_{\mathbf{\phi}_{1}} \mathbf{L}_{\perp}\times_{\mathbf{\phi}_{2}}\mathbf{L}_{\nu}$ be a bi-warped product submanifold of a locally conformal almost cosymplectic manifold $\tilde{\mathbf{L}}$, then, we have
 	\begin{itemize}
 		\item [(i)] $g(\Sigma({\mathbf{X}_4},{\mathbf{Z}_4}),\Phi \mathbf{V_4})=0,$
 		\item[(ii)] $g(\Sigma({\mathbf{X}_4},\mathbf{V_4}),\mathbf{FZ_4})=0.$
 	\end{itemize}
 	for any ${\mathbf{X}_4}\in \mathfrak{D}, ~\mathbf{V_4}\in \mathfrak{D}^{\perp}$ and ${\mathbf{Z}_4}\in \mathfrak{D}^{\nu}.$ Moreover, $\mathbf{L}_{\mathbf{T}}$ is an anti-invariant submanifold, $\mathbf{L}_{\perp}$ is an anti-invariant submanifold and $\mathbf{L}_{\nu}$ is a proper slant submanifold of $\tilde{\mathbf{L}}$. 
 \end{lemma}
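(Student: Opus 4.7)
My plan is to prove (ii) first by a direct computation using the covariant derivative of $\Phi$ together with the bi-warped product formula, and then derive (i) from (ii) via an identity relating the two expressions. The structure is analogous to Lemma \ref{l33} (iii) and the second displayed equation in Lemma 3.2, but with the roles of $\mathfrak{D}^\perp$ and $\mathfrak{D}^\nu$ exchanged because now $\mathbf{L}_T$ is the base rather than a fiber.

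For (ii), I would start from $g(\Sigma(X,V),FZ) = g(\tilde\nabla_V X, FZ)$, where the tangential term drops because $FZ$ is normal. Writing $FZ=\Phi Z - TZ$ and using the warped product formula \eqref{b1} (which gives $\nabla_V X = X(\ln\phi_1)\,V$, so that $g(\nabla_V X,TZ)=0$ by orthogonality of the fibers $\mathfrak{D}^\perp$ and $\mathfrak{D}^\nu$), reduces the problem to $g(\tilde\nabla_V X,\Phi Z)$. Here I would invoke the skew-symmetry of $\Phi$ with respect to $g$ (a consequence of \eqref{a2}) to rewrite this as $-g(\Phi\tilde\nabla_V X,Z)$, and then use the structure equation \eqref{a3} to commute $\Phi$ past $\tilde\nabla_V$: since $X\in\Gamma(\mathfrak{D})$ and $V\in\Gamma(\mathfrak{D}^\perp)$, one has $g(\Phi V,X)=-g(V,\Phi X)=0$ (because $\Phi X\in\mathfrak{D}$), so for $\eta(X)=0$ the term $(\tilde\nabla_V\Phi)X$ vanishes. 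The case $X=\xi$ requires a separate two-line argument using $\tilde\nabla_V\xi=\beta V$ and $g(V,FZ)=0$. Finally, since $\Phi X\in\mathfrak{D}$, \eqref{b1} gives $\nabla_V\Phi X=\Phi X(\ln\phi_1)V$, which is in $\mathfrak{D}^\perp$ and hence orthogonal to $Z\in\mathfrak{D}^\nu$; the second-fundamental-form component is orthogonal to $Z$ by tangent/normal duality. This closes (ii).

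For (i), I would run a parallel computation starting from $g(\Sigma(X,Z),\Phi V)=g(\tilde\nabla_X Z,\Phi V)$ to obtain the identity $g(\Sigma(X,Z),\Phi V)=g(\Sigma(X,V),FZ)$. The ingredients are the same: move $\Phi$ past the connection using \eqref{a3} (both $\eta(Z)=0$ and $g(\Phi X,Z)=0$ hold since $\xi\in\mathfrak{D}$ and $\Phi X\in\mathfrak{D}\perp Z$), split $\Phi Z = TZ+FZ$, and use the Weingarten formula \eqref{a5} to convert the $FZ$-piece into $-g(A_{FZ}X,V)=-g(\Sigma(X,V),FZ)$. Combined with (ii), this gives (i).

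The concluding ``moreover'' assertion should be read as reminding the reader of the standing hypotheses built into the decomposition \eqref{b2}--\eqref{b3}: $\mathbf{L}_\perp$ is anti-invariant, $\mathbf{L}_\nu$ is proper slant, and only the invariant factor $\mathbf{L}_T$ enters the base; it is not a separate claim requiring proof. I expect the main technical obstacle to be bookkeeping the $\xi$-component (the case $\eta(X)\neq 0$), since this is where the $\beta$-terms from \eqref{a3} could survive; careful use of $\tilde\nabla_V\xi = \beta(V-\eta(V)\xi)$ and the fact that $\eta(V)=\eta(Z)=0$ should make all such terms collapse.
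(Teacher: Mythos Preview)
The paper states this lemma without proof; your approach is correct and mirrors exactly what the paper does for the analogous Lemma 3.2 (the case $\mathbf{L}_\nu\times_{\phi_1}\mathbf{L}_T\times_{\phi_2}\mathbf{L}_\perp$): first establish the vanishing of one expression by a direct Gauss--Weingarten computation using \eqref{a3} and \eqref{b1}, then relate the two expressions via \eqref{a5}. Your separate handling of the $\xi$-component is more cautious than needed---the surviving $\eta(X)$-term in $(\tilde\nabla_V\Phi)X=-\beta\,\eta(X)\,\Phi V$ dies immediately when paired with $Z$ since $\Phi V$ is normal and $Z$ is tangent---but this is harmless.
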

 The following frame fields for a $m$-dimensional bi-warped product submanifold are now constructed. Let $\mathbf{L}=\mathbf{L}_{\mathbf{T}}\times_{\mathbf{\phi}_{1}}\mathbf{L}_{\perp}\times_{\mathbf{\phi}_{2}}\mathbf{L}_{\nu}$ be of a $(2n+1)$-dimension locally conformal almost cosymplectic manifold $\tilde{\mathbf{L}}$ such that $\xi$ is tangent to $\mathbf{L}_{\mathbf{T}}.$\\
 
 If the dimensions dim$(\mathbf{L}_{\mathbf{T}})=2_{t_{1}}+1,$ dim$(\mathbf{L}_{\perp})=t_{2},$ and dim$(\mathbf{L}_{\nu})=2t_{3},$ then the orthonormal frames of the corresponding tangent spaces are $\mathfrak{D}$, $\mathfrak{D}^{\perp}$ and $\mathfrak{D}^{\nu},$ respectively. Given by $\{e_{1},\cdots,e_{t_{1}},e_{t_{1}+1}=\Phi e_{1},\cdots,e_{2{t_{1}}}=\Phi e_{t_{1}}, e_{2_{t_{1}+1}}=\xi\},~ \{e_{2_{t_{1}+2}}=\bar{e}_{1},\cdots,e_{2_{t_{1}+t_{2}+1}}=\bar{e}_{t_{1}}\}$ and $\{e_{2_{t_{1}+t_{2}+2}}=e^{*}_{1},\cdots,e_{2_{t_{1}+1+t_{2}+t_{3}}}=e^{*}_{t_{3}},e_{2_{t_{1}+2+t_{2}+t_{3}}}=e^{*}_{t_{3}+1}=\sec\nu \mathbf{T}e^{*}_{1},\cdots,e_{m}=e^{*}_{2t_{3}}=\sec\nu \mathbf{T}e^{*}_{t_{3}},$ then the orthonormal frame fields of the normal subbundles of $ \Phi \mathfrak{D}^{\perp}, \mathbf{\phi}\mathfrak{D}^{\nu}$ and $\mu$, respectively, are $\{e_{m+1}=\tilde{e}_{1}=\Phi \bar{e}_{1},\cdots,e_{m+t_{2}}=\tilde{e}_{t_{2}}=\Phi \bar{e}_{t_{2}}\},$ $\{e_{m+t_{2}+1}=\tilde{e}_{t_{2}+1}=\csc\nu \mathbf{\phi}e_{1}^{*},\cdots,e_{m+t_{2}+t_{3}}=\tilde{e}_{t_{2}+t_{3}}=\csc\nu \mathbf{\phi}e^{*}_{t_{3}}, e_{m+t_{2}+t_{3}+1}=\tilde{e}_{t_{2}+t_{3}+1}=\cos\nu\sec\nu \mathbf{FT}e^{*}_{1},\cdots,e_{m+t_{2}+2t_{3}}=\tilde{e}_{t_{2}+2t_{3}}=\csc\nu\sec\nu \mathbf{FT}e^{*}_{t_{3}}\}$ and $\{e_{m+t_{2}+2t_{3}}=\tilde{e}_{t_{2}+2t_{3}+1},\cdots,e_{2n+1}=\tilde{e}_{2n+1-m-t_{2}-2t_{3}}\}.$
 \begin{theorem}
 	Let $\mathbf{L}=\mathbf{L}_{\mathbf{T}}\times_{\mathbf{\phi}_{1}}\mathbf{L}_{\perp}\times_{\mathbf{\phi}_{2}}\mathbf{L}_{\nu}$ be a $\mathfrak{D}^{\perp}-\mathfrak{D}^{\nu}$ mixed totally geodesic bi-warped product submanifold of a locally conformal almost cosymplectic manifold $\tilde{\mathbf{L}}$ such that $\xi$ is tangent to $\mathbf{L}_{\mathbf{T}}.$ Then
 	\begin{itemize}
 		\item [(i)] The squared norm of the second fundamental form $\Sigma$ of $\mathbf{L}$ satisfies
 		\begin{align}
 		||\Sigma||^{2}\geq&2t_{2}(||\vec{\nabla}(\ln \mathbf{\phi}_{1})||^{2}-\beta^{2})\notag\\
 		&+4t_{3}\csc^2\nu\Big(\sin^2\nu+2\cos^{2}\nu\Big)\Big(||\vec{\nabla}(\ln \mathbf{\phi}_{2})||^{2}-\beta^{2}\Big)
 		\end{align}
 		where $t_{2}=\dim(\mathbf{L}_{\perp}), 2t_{3}=\dim(\mathbf{L}_{\nu})$ and $\vec{\nabla}(\ln \mathbf{\phi}_{1})$ is the gradient of $\ln \mathbf{\phi}_{1}$ along $\mathbf{L}_{\perp}$ and $\vec{\nabla}(\ln \mathbf{\phi}_{2})$ is the gradient of $\ln \mathbf{\phi}_{2}$ along $\mathbf{L}_{\nu}.$\\
 		
 		\item [(ii)] If the equality sign in $(i)$ holds identically, then $\mathbf{L}_{\mathbf{T}}$ is a totally geodesic submanifold of $\tilde{\mathbf{L}}, \mathbf{L}_{\perp}$ and $\mathbf{L}_{\nu}$ are totally umbilical submanifolds of $\tilde{\mathbf{L}}$ with -$\vec{\nabla}(\ln \mathbf{\phi}f_{1})$ and -$\vec{\nabla}(\ln \mathbf{\phi}_{2}$) as mean curvature vectors, respectively.
 	\end{itemize}
 \end{theorem}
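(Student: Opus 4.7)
The plan is to mirror the proof of Theorem 4.1. Starting from
\[
||\Sigma||^2 = \sum_{r=m+1}^{2n+1}\sum_{i,j=1}^{m} g(\Sigma(e_i,e_j),e_r)^2,
\]
I would split the normal index $r$ over the three subbundles $\Phi\mathfrak{D}^\perp$, $\mathbf{F}\mathfrak{D}^\nu$, and $\mu$ using the orthonormal frame constructed just before the statement, discard the $\mu$-block (since no bi-warped-product identity is available to control it), and within each of the two remaining normal blocks further split the tangential pair $(i,j)$ according to the decomposition $\mathfrak{D}\oplus\mathfrak{D}^\perp\oplus\mathfrak{D}^\nu$, with a factor of $2$ for off-diagonal pairings.

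Using Lemmas 4.1--4.3 together with the $\mathfrak{D}^\perp$--$\mathfrak{D}^\nu$ mixed totally geodesic hypothesis, the following contributions vanish identically: $g(\Sigma(\mathfrak{D},\mathfrak{D}),\Phi\mathfrak{D}^\perp)$ by Lemma 4.1(i); $g(\Sigma(\mathfrak{D},\mathfrak{D}),\mathbf{F}\mathfrak{D}^\nu)$ by Lemma 4.2(i); the two cross pairings $g(\Sigma(\mathfrak{D},\mathfrak{D}^\nu),\Phi\mathfrak{D}^\perp)$ and $g(\Sigma(\mathfrak{D},\mathfrak{D}^\perp),\mathbf{F}\mathfrak{D}^\nu)$ by Lemma 4.3; and every pairing involving $\Sigma(\mathfrak{D}^\perp,\mathfrak{D}^\nu)$ by the mixed hypothesis. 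The surviving pairings $g(\Sigma(\mathfrak{D},\mathfrak{D}^\perp),\Phi\mathfrak{D}^\perp)$ and $g(\Sigma(\mathfrak{D},\mathfrak{D}^\nu),\mathbf{F}\mathfrak{D}^\nu)$ I would evaluate via Lemma 4.1(ii) and Lemma 4.2(ii) together with the eight interchange identities \eqref{d2}--\eqref{d8}; the pure diagonal contributions $\Sigma(\mathfrak{D}^\perp,\mathfrak{D}^\perp)$ and $\Sigma(\mathfrak{D}^\nu,\mathfrak{D}^\nu)$ are retained as non-negative.

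Summing Lemma 4.1(ii) over the constructed frame pairs each $e_i\in\mathfrak{D}$ with $\Phi e_i$; using $\Phi^2 = -I+\eta\otimes\xi$ and $\xi(\ln\phi_1)=\beta$ from Theorem 3.2, the combined sum collapses to $||\vec\nabla(\ln\phi_1)||^2-\beta^2$, and multiplication by the off-diagonal doubling factor $2$ and by $t_2=\dim(\mathbf{L}_\perp)$ produces the antiinvariant coefficient $2t_2$. For the slant block I would substitute the eight interchange identities, apply Theorem 2.1 to rewrite $\sum(\mathbf{T}e_r^{*}(\ln\phi_2))^2$ as $\cos^2\nu\bigl(||\vec\nabla(\ln\phi_2)||^2-\beta^2\bigr)$, and track the normalization factors $\csc\nu$ on $\mathbf{F}e_r^{*}$ and $\csc\nu\sec\nu$ on $\mathbf{FT}e_r^{*}$; collecting the resulting $\sin^2\nu$, $\cos^2\nu$, $\sec^2\nu$ weights assembles into the announced coefficient $4t_3\csc^2\nu(\sin^2\nu+2\cos^2\nu)$.

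Part (ii) is then a bookkeeping consequence of which components were discarded: equality forces $\Sigma(T\mathbf{L},T\mathbf{L})\perp\mu$, $\Sigma(\mathfrak{D},\mathfrak{D})=0$, $\Sigma(\mathfrak{D}^\perp,\mathfrak{D}^\perp)\subset\Phi\mathfrak{D}^\perp$, and $\Sigma(\mathfrak{D}^\nu,\mathfrak{D}^\nu)\subset\mathbf{F}\mathfrak{D}^\nu$; combined with the warped-product connection formula $\nabla_X Z = X(\ln\phi_i)Z^i$, this yields $\mathbf{L}_T$ totally geodesic in $\tilde{\mathbf{L}}$ and both $\mathbf{L}_\perp, \mathbf{L}_\nu$ totally umbilical, with mean-curvature vectors $-\vec\nabla(\ln\phi_1)$ and $-\vec\nabla(\ln\phi_2)$ respectively. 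The hard part, I anticipate, is the trigonometric bookkeeping in the slant block: the eight interchange identities produce overlapping contributions weighted by $\cos^2\nu$, $\sin^2\nu$, $\sec^2\nu$, $\csc^2\nu$ together with cross terms $g(\mathbf{TZ},\mathbf{W})$, and assembling them into a single compact coefficient requires careful cancellation using Theorem 2.1 and the observation that $\mathbf{T}\xi=0$ kills the $\xi$-direction from the $\mathbf{T}$-transformed frame.
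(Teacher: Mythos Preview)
Your plan matches the paper's proof almost exactly: the same normal-bundle splitting, the same discarding of the $\mu$-block, the same use of Lemmas 4.1--4.3 to kill the $(\mathfrak D,\mathfrak D)$ and mixed $(\mathfrak D,\mathfrak D^\perp)$/$(\mathfrak D,\mathfrak D^\nu)$ cross terms against the ``wrong'' normal subbundle, and the same evaluation of the two surviving off-diagonal blocks via Lemma~4.1(ii), Lemma~4.2(ii) and the interchange identities \eqref{d2}--\eqref{d8}, finishing with $\xi(\ln\phi_i)=\beta$ and $\Sigma(\xi,\cdot)=0$.

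One bookkeeping slip in your equality discussion: you write that equality forces $\Sigma(\mathfrak D^\perp,\mathfrak D^\perp)\subset\Phi\mathfrak D^\perp$ and $\Sigma(\mathfrak D^\nu,\mathfrak D^\nu)\subset\mathbf F\mathfrak D^\nu$. But you yourself said these pure diagonal blocks are ``retained as non-negative'' in \emph{every} normal direction (including $\Phi\mathfrak D^\perp$ and $\mathbf F\mathfrak D^\nu$), so equality actually forces $\Sigma(\mathfrak D^\perp,\mathfrak D^\perp)=0$ and $\Sigma(\mathfrak D^\nu,\mathfrak D^\nu)=0$, which is what the paper obtains. This stronger conclusion is what you need: totally umbilical in $\tilde{\mathbf L}$ follows from combining $\Sigma(\mathfrak D^\perp,\mathfrak D^\perp)=0$ with the intrinsic umbilicity $\Sigma^\perp(U,V)=-\vec\nabla(\ln\phi_1)\,g(U,V)$ coming from \eqref{b1}, not from a mere inclusion $\subset\Phi\mathfrak D^\perp$.
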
 
 \begin{proof}
 	From the definition of $\Sigma$, we get
 	\begin{align*}
 	||\Sigma||^{2}=\sum_{i,j=1}^{m}g(\Sigma(e_{i},e_{j}),\Sigma(e_{i},e_{j}))=\sum_{r=m+1}^{2n+1}\sum_{i,j=1}^{m}g(\Sigma(e_{i},e_{j}),e_{r})^{2}.
 	\end{align*}
 	So the above expression can be rewritten in the form \\
 	\begin{equation}\label{d11}
 	\begin{split}
 	||\Sigma||^{2}&=\sum_{r=1}^{t_{2}}\left(\sum_{i,j=1}^{m}g(\Sigma(e_{i},e_{j}),\tilde{e}_{r})^{2}\right)\\
 	&+\sum_{r=t_{2}+1}^{t_{2}+2t_{3}}\left(\sum_{i,j=1}^{m}g(\Sigma(e_{i},e_{j}),\tilde{e}_{r})^{2}\right)\\
 	&+\sum_{r=t_{2}+2t_{3}+1}^{2n+1-t_{2}-2t_{3}-m}\left(\sum_{i,j=1}^{m}g(\Sigma(e_{i},e_{j}),\tilde{e}_{r})^{2}\right).
 	\end{split}
 	\end{equation}
 	Leaving the third $\mu$-components positive terms as we could not find any relation for bi-warped pproducts in terms of the $\mu$-components, then, by using the constructed frame fields of $\mathfrak{D}, \mathfrak{D}^{\perp}$ and $\mathfrak{D}^{\nu},$ we obtain 
 	\begin{equation}\label{d12}
 	\begin{split}
 	||\Sigma||^{2} &\geq \sum_{r=1}^{t_{2}}\sum_{i,j=1}^{2_{t_{1}}+1}g(\Sigma(e_{i},e_{j}),\Phi \bar{e}_{r})^{2}+\sum_{r=1}^{t_{2}}\sum_{i,j=1}^{t_{2}}g(\Sigma(\bar{e}_{i},\bar{e}_{j}),\Phi \bar{e}_{r})^{2}\\
 	&+\sum_{r=1}^{t_{2}}\sum_{i,j=1}^{2t_{3}}g(\Sigma(e_{i}^{*},e_{j}^{*}),\Phi \bar{e}_{r})^{2}+2\sum_{r=1}^{t_{2}}\sum_{i=1}^{2_{t_{1}}+1}\sum_{j=1}^{t_{2}}g(\Sigma(e_{i},\bar{e}_{j}),\Phi \bar{e}_{r})^{2}\\
 	&+2\sum_{r=1}^{t_{2}}\sum_{i=1}^{2_{t_{1}}+1}\sum_{j=1}^{2t_{3}}g(\Sigma(e_{i},e_{j}^{*}),\Phi \bar{e}_{r})^{2}+2\sum_{r=1}^{t_{2}}\sum_{i=1}^{t_{2}}\sum_{j=1}^{2t_{3}}g(\Sigma(\bar{e}_{i},e_{j}^{*}),\Phi \bar{e}_{r})^{2}\\
 	&+\sum_{r=1}^{2t_{3}}\sum_{i,j=1}^{2_{t_{1}+1}}g(\Sigma(e_{i},e_{j}),\Phi \tilde
 	{e}_{r})^{2}+\sum_{r=1}^{2t_{3}}\sum_{i,j=1}^{t_{2}}g(\Sigma(\bar{e}_{i},\bar{e}_{j}),\Phi \tilde{e}_{r})^{2}\\
 	&+\sum_{r=1}^{2t_{3}}\sum_{i,j=1}^{2t_{3}}g(\Sigma(e_{i}^{*},e_{j}^{*}),\Phi \tilde{e}_{r})^{2}+2\sum_{r=1}^{2t_{3}}\sum_{i=1}^{2_{t_{1}}+1}\sum_{j=1}^{t_{2}}g(\Sigma(e_{i},\bar{e}_{j}),\Phi \tilde{e}_{r})^{2}\\
 	&+2\sum_{r=1}^{2t_{3}}\sum_{i=1}^{2_{t_{1}}+1}\sum_{j=1}^{2t_{3}}g(e_{i},e_{j}^{*}),\Phi \tilde{e}_{r})^{2}+2\sum_{r=1}^{2t_{3}}\sum_{i=1}^{t_{2}}\sum_{j=1}^{2t_{3}}g(\Sigma(\bar{e}_{i},e_{j}^{*}),\Phi \tilde{e}_{r})^{2}.
 	\end{split}
 	\end{equation}
 	First and fifth terms in the right hand side of \eqref{d12} vanish identically by using Lemma $(4.1) (i)$ and Lemma $(4.3) (ii),$ respectively. Similarly, seventh and tenth terms are also identically zero by using Lemma $(4.2)(i)$ and Lemma $(4.3)(ii)$ respectively. In addition, there is no relation for $g(\Sigma(\bar{e}_{i},\bar{e}_{j}),\Phi \bar{e}_{r}),~i,j,r=1,...,t_{2},$ when the vectors are from the same space. We do not have any relations for bi-warped products of the following terms:\\
 	$g(\Sigma(e_{i}^{*},e_{j}^{*}),\Phi \bar{e}_{r}),~i,j=1,...,2t_{3}$ and $r=1,...,t_{2};$\\
 	$g(\Sigma,\Phi \bar{e}_{r}),~i,j=1,...,2t_{3}$ and $r=1,...,t_{2};$\\
 	$g(\Sigma(\bar{e}_{i}^{*},\bar{e}_{j}^{*}),\tilde{e}_{r}),~i,j=1,...,t_{2}$ and $r=1,...,2t_{3};$\\
 	$g(\Sigma(e_{i}^{*},e_{j}^{*}),\tilde{e}_{r}),~i,j,r=1,...,2t_{3};$\\
 	$g(\Sigma(\bar{e}_{i},e_{j}^{*}),\tilde{e}_{r}),~i=1,...,t_{2}$ and $j,r=1,...,2t_{3};$\\
 	Thus, we have left these positive terms but we will consider them for equality case. Then, we assume the forth and eleventh terms which are evaluated.\\
 	\begin{equation*}
 	\begin{split}
 	||\Sigma||^{2} &\geq 2\sum_{j,r=1}^{t_{2}}\sum_{i=1}^{2_{t_{1}}+1}g(\Sigma(e_{i},\bar{e}_{j}),\Phi \bar{e}_{r})^{2}+2\sum_{j,r=1}^{2t_{3}}\sum_{i=1}^{2_{t_{1}}+1}g(\Sigma(e_{i},e_{j}^{*}), \tilde{e}_{r})^{2}\\
 	&=2\sum_{j,r=1}^{t_{2}}\sum_{i=1}^{2_{t_{1}}}g(\Sigma(e_{i},\bar{e}_{j}),\Phi \bar{e}_{r})^{2}+2\sum_{j,r=1}^{t_{2}}g(\Sigma(e_{2_{t_{1}}+1},\bar{e}_{j}),\Phi \bar{e}_{r})^{2}\\
 	&+2\sum_{j,r=1}^{2t_{3}}\sum_{i=1}^{2_{t_{1}}}g(\Sigma(e_{i},e_{j}^{*}),\tilde{e}_{r})^{2}+2\sum_{j,r=1}^{2t_{3}}g(\Sigma(e_{2_{t_{1}}+1},e_{j}^{*}),\Phi \tilde{e}_{r})^{2}.
 	\end{split}
 	\end{equation*}
 	Since $e_{2_{t_{1}}+1}=\xi$ and for a locally conformal almost cosymplectic manifold $\Sigma(\xi ,{\mathbf{X}_4})=0,$ for any $X\in \mathbf{TM}$, then the second and forth terms in the right hand side of the above expression vanish identically. Thus, by use of frame fields of $\mathfrak{D}, \mathfrak{D}^{\nu}, \Phi \mathfrak{D}^{\perp}$ and $\mathbf{\phi}\mathfrak{D}^{\nu},$ we find\\
 	\begin{equation*}
 	\begin{split}
 	||\Sigma||^{2}&\geq 2\sum_{j,r=1}^{t_{2}}\sum_{i=1}^{t_{1}}g(\Sigma(e_{i},\bar{e}_{j}),\Phi \bar{e}_{r})^{2}+2\sum_{j,r=1}^{t_{2}}\sum_{i=1}^{t_{1}}g(\Sigma(\Phi e_{i},\bar{e}_{j}),\Phi \bar{e}_{r})^{2}\\
 	&+2\csc^{2}\nu \sum_{j,r=1}^{t_{3}}\sum_{i=1}^{t_{1}}g(\Sigma(e_{i},e^{*}_{j}),Fe^{*}_{r})^{2}+2\csc^{2}\nu \sum_{j,r=1}^{t_{3}}\sum_{i=1}^{t_{1}}g(\Sigma(\Phi e_{i},e^{*}_{j}),\mathbf{\phi}e^{*}_{r})^{2}\\
 	&+2\sec^{2}\nu \csc^{2}\nu \sum_{j,r=1}^{t_{3}}\sum_{i=1}^{t_{1}}g(\Sigma(e_{i},\mathbf{T}e^{*}_{j}),\mathbf{\phi}e^{*}_{r})^{2}+2sec^{2}\nu \csc^{2}\nu \sum_{j,r=1}^{t_{3}}\sum_{i=1}^{t_{1}}g(\Sigma(\Phi e_{i},\mathbf{T}e^{*}_{j}),\mathbf{\phi}e^{*}_{r})^{2}\\
 	&+2\sec^{2}\nu \csc^{2}\nu \sum_{j,r=1}^{t_{3}}\sum_{i=1}^{t_{1}}g(\Sigma(e_{i},e^{*}_{j}),\mathbf{FT}e^{*}_{r})^{2}+2sec^{2}\nu \csc^{2}\nu \sum_{j,r=1}^{t_{3}}\sum_{i=1}^{t_{1}}g(\Sigma(\Phi e_{i},e^{*}_{j}),\mathbf{FT}e^{*}_{r})^{2}\\
 	&+2\sec^{4}\nu \csc^{2}\nu \sum_{j,r=1}^{t_{3}}\sum_{i=1}^{t_{1}}g(\Sigma(e_{i},\mathbf{T}e^{*}_{j}),\mathbf{FT}e^{*}_{r})^{2}.
 	\end{split}
 	\end{equation*}
 	Then from Lemma $4.1(ii)$, Lemma $4.2(ii)$ and the relations \eqref{d2}-\eqref{d8}, the above expression takes the form
 	\begin{equation*}
 	\begin{split}
 	||\Sigma||^{2}&\geq 2t_{2}\sum_{i=1}^{t_{1}}(e_{i}(\ln\mathbf{\phi}_{1}))^{2}+2t_{2}\sum_{i=1}^{t_{1}}(\Phi e_{i}(\ln\mathbf{\phi}_{1}))^{2}\\
 	&+4t_{3}\csc^{2}\nu \sum_{i=1}^{t_{1}}(e_{i}(\ln\mathbf{\phi}_{2}))^{2}+4t_{3}\cot^{2}\nu \sum_{i=1}^{t_{1}}(e_{i}(ln\mathbf{\phi}_{2}))^{2}\\
 	&+4t_{3}\csc^{2}\nu \sum_{i=1}^{t_{1}}(\Phi e_{i}(\ln\mathbf{\phi}_{2}))^{2}+4t_{3}\cot^{2}\nu \sum_{i=1}^{t_{1}}(\Phi e_{i}(\ln\mathbf{\phi}_{2}))^{2}\\
 	&=2t_{2}\sum_{i=1}^{2_{t_{1}}+1}(e_{i}(\ln\mathbf{\phi}_{1}))^{2}-2t_{2}(e_{2_{t_{1}}+1}(\ln\mathbf{\phi}_{1}))^{2}\\
 	&+4t_{3}(1+2\cot^{2}\nu)\sum_{i=1}^{2_{t_{1}}+1}(e_{i}(\ln\mathbf{\phi}_{2}))^{2}-4t_{3}(1+2\cot^{2}\nu)(e_{2_{t_{1}}+1}(\ln\mathbf{\phi}_{2}))^{2}\\
 	\end{split}
 	\end{equation*}
 	Since $2{t_{1}}+1=\xi$, by $\xi(\ln\mathbf{\phi}_{1})=\beta$, $\xi(\ln\mathbf{\phi}_{2})=\beta$ and using
 	\eqref{a9}, we obtain
 	$$||\Sigma||^{2}\geq 2t_{2}(||\vec{\nabla}(\ln\mathbf{\phi}_{1})||^{2}-\beta^{2})+4t_{3}(1+2\cot^{2}\nu)(||\vec{\nabla}(\ln\mathbf{\phi}_{2}||)^{2}-4t_{3}(1+2\cot^{2}\nu)\beta^{2},$$
 	which is the inequality $(i).$\\
 	For the equality case, from the leaving third term in the right hand side of equation \eqref{d11}, we get 
 	\begin{equation}\label{d14}
 	\Sigma({\mathbf{X}_4},{\mathbf{Y}_4})\perp \mu 
 	\end{equation}
 	for any ${\mathbf{X}_4},{\mathbf{Y}_4}\in \mathbf{TM}.$\\
 	In addition from the vanishing first and seventh terms of \eqref{d12}, we find
 	\begin{equation}\label{d15}
 	\Sigma(\mathfrak{D},\mathfrak{D})\perp \Phi \mathfrak{D}^{\perp}, ~\Sigma(\mathfrak{D},\mathfrak{D})\perp \mathbf{\phi}\mathfrak{D}^{\nu}.
 	\end{equation}
 	Then from \eqref{d14} and \eqref{d15}, we get 
 	\begin{equation}\label{d16}
 	\Sigma(\mathfrak{D},\mathfrak{D})=\{0\}.
 	\end{equation}
 	Similarly, from the leaving second and eighth terms of \eqref{d12}, we find that
 	\begin{equation}\label{d17}
 	\Sigma(\mathfrak{D}^{\perp},\mathfrak{D}^{\perp})\perp \Phi \mathfrak{D}^{\perp}, ~\Sigma(\mathfrak{D}^{\perp},\mathfrak{D}^{\perp})\perp \mathbf{\phi}\mathfrak{D}^{\nu}.
 	\end{equation}
 	Thus, from \eqref{d14} and \eqref{d17}, we have 
 	\begin{equation}\label{d18}
 	\Sigma(\mathfrak{D}^{\perp},\mathfrak{D}^{\perp})=\{0\}.
 	\end{equation}
 	Now, from the leaving third and ninth terms of \eqref{d12}, we find that
 	\begin{equation}\label{d19}
 	\Sigma(\mathfrak{D}^{\nu},\mathfrak{D}^{\nu})\perp \Phi \mathfrak{D}^{\perp}, ~\Sigma(\mathfrak{D}^{\nu},\mathfrak{D}^{\nu})\perp \mathbf{\phi}\mathfrak{D}^{\nu}.
 	\end{equation}
 	Thus, from \eqref{d14} and \eqref{d18}, we have 
 	\begin{equation}\label{d20}
 	\Sigma(\mathfrak{D}^{\nu},\mathfrak{D}^{\nu})=\{0\}.
 	\end{equation}
 	Since $\mathbf{M_{T}}$ is totally geodesic submanifold of $\mathbf{L}$, by using \eqref{d16}, \eqref{d18} and \eqref{d20}, we conclude that $\mathbf{M_{T}}$ is totally geodesic in $\tilde{\mathbf{L}}$, which is the first part of the equality case.\\
 	On the other hand, the vanishing tenth term of \eqref{d12} with \eqref{d14}, given by
 	\begin{equation}\label{d21}
 	\Sigma(\mathfrak{D},\mathfrak{D}^{\perp})\subset \Phi \mathfrak{D}^{\perp}
 	\end{equation} 
 	Similarly, from the vanishing fifth term in \eqref{d12} with \eqref{d14}, we get
 	\begin{equation}\label{d22}
 	\Sigma(\mathfrak{D},\mathfrak{D}^{\nu})\subset \Phi \mathfrak{D}^{\nu}
 	\end{equation}
 	In addition, from the leaving sixth and twelfth terms in \eqref{d12}, we get  
 	\begin{equation}\label{d23}
 	\Sigma(\mathfrak{D}^{\perp},\mathfrak{D}^{\nu})\perp \Phi \mathfrak{D}^{\perp}, ~\Sigma(\mathfrak{D}^{\perp},\mathfrak{D}^{\nu})\perp \mathbf{\phi}\mathfrak{D}^{\nu}.
 	\end{equation}
 	Thus, from \eqref{d14} and \eqref{d23}, we have 
 	\begin{equation}\label{d24}
 	\Sigma(\mathfrak{D}^{\perp},\mathfrak{D}^{\nu})=\{0\}.
 	\end{equation}
 	On the other hand, from \eqref{c1}, we know that, for any $1\leq i\neq j\leq 2,$ and any vector field ${\mathbf{Z}_4}_{i}$ in $\mathbf{D}_{i}$ and ${\mathbf{Z}_4}_{j}$ in $\mathbf{D}_{j},$ we have 
 	$$\nabla_{{\mathbf{Z}_4}_{i}}{\mathbf{Z}_4}_{j}=0,$$ 
 	which implies \\
 	$$g(\nabla_{{\mathbf{Z}_4}_{i}}\mathbf{W_4}_{i},{\mathbf{Z}_4}_{j})=0.$$
 	Using this fact, if $h^{\perp}$ denotes the second fundamental form of $\mathbf{L}_{\perp}$ in $\mathbf{L}$, then we have \\
 	$g(\Sigma^{\perp}(\mathbf{U_4},\mathbf{V_4}),{\mathbf{X}_4})=g(\nabla_{\mathbf{U_4}}\mathbf{V_4},{\mathbf{X}_4})=g(\tilde{\nabla}_{\mathbf{U_4}}\mathbf{V_4},{\mathbf{X}_4})=-g(\tilde{\nabla}_{\mathbf{U_4}}\mathbf{X,V}).$\\
 	for any $\mathbf{U,V}\in \mathfrak{D}^{\perp}$ and ${\mathbf{X}_4}\in \mathfrak{D}$. Using \eqref{a4} and \eqref{b1}, we find \\
 	$$g(\Sigma^{\perp}(\mathbf{U,V),X})=-{\mathbf{X}_4}(\ln\mathbf{\phi}_{1})g(\mathbf{U,V}).$$
 	or equivalently,
 	\begin{equation}\label{d25}
 	\Sigma^{\perp}(\mathbf{U,V})=-\vec{\nabla}(\ln\mathbf{\phi}_{1})g(\mathbf{U,V}).
 	\end{equation}
 	Similarly, if $\Sigma^{\nu}$ is the second fundamental form of $\mathbf{L}_{\nu}$ in $\mathbf{L},$ then we can obtain 
 	\begin{equation}\label{d26}
 	\Sigma^{\nu}(\mathbf{Z,W})=-\vec{\nabla}(\ln\mathbf{\phi}_{2})g(\mathbf{Z,W}),
 	\end{equation}
 	for any $\mathbf{Z,W}\in \mathfrak{D}^{\nu}.$ Since $\mathbf{L}_{\perp}$ and $\mathbf{L}_{\nu}$ are totally umbilical in $\mathbf{L},$ using this fact with \eqref{d21}, \eqref{d22}, \eqref{d25} and \eqref{d26}, we conclude that $\mathbf{L}_{\perp}$ and $\mathbf{L}_{\nu}$ are totally umbilical submanifolds of $\tilde{\mathbf{L}}$, which proves statement (ii) and we are done.
 \end{proof}  
 \section*{Acknowledgments}
 The second author is thankful to CSIR for providing financial assistance in terms of JRF scholarship vide letter no.  (09/1051(0026)/2018-EMR-1).

\bigskip
.
\\
{\bfseries Statements and Declarations}
\\
The authors declare that they have no financial or non-financial interests, directly or indirectly related to the work presented for publication.
\\
\\
{\bfseries Data availability}
\\
Not applicable.
\\
\\
{\bfseries Author Contribution statement}
\\
All authors contributed equally to the paper.


\begin{thebibliography}{30}
         
 	\bibitem{AAA1} Ali, A.; Mofarreh, F.  {Geometric inequalities of bi-warped product submanifolds of nearly Kenmotsu manifolds and their applications.}  \emph{Mathematics} \textbf{2020}, {\emph{8}}, 1805.
 	\bibitem{AA3} Ali, A.; Othman, W.A.M.; Ozel, C.; Hajjari, T. {A geometric inequality for warped product pseudo-slant submanifolds of nearly Sasakian manifolds}. \emph{C. R. Acad. Bulgar. Sci.} \textbf{2017}, {\emph{70}}, 175--182.
 	\bibitem{AA1} Ali, A.; Othman, W.A.M.; Ozel, C.  {Some inequalities for warped product pseudo-slant submanifolds of nearly Kenmotsu manifolds}.  \emph{J. Inequal. Appl.} \textbf{2015}, {\emph{2015}}, 291. 
           \bibitem{Bish} Bishop, R. L., O’Neill, B.,\textit{Manifolds of negative curvature,} Trans. Amer. Math. Soc, 145, 1-49.
 	\bibitem{C6}  Chen, B.Y.; Dillen, F.  {Optimal inequalities for multiply warped product submanifolds.} \emph{Int. Electron. J. Geom. }\textbf{2008}, {\emph{1}}, 1--11.
           \bibitem{BYChen} Chen, B.Y,\textit{Another general inequality for CR-warped products in complex space forms,} Hokkaido Math. J., 32 (2003), 415-444.
           \bibitem{BY-Chen} Chen, B.Y,\textit{Some pinching and classification theorems for minimal submanifolds,} Archiv der Math, 60 (1993), 568-578.
              \bibitem{R80} K. Kenmotsu, A class of almost contact Riemannian manifolds, \textit{Tohoku Math. J.,} $\boldsymbol{24}$ (1972), 93-103. 
 	    \bibitem{MA2} Mustafa, A.; Uddin, S.; Wong, R.B.  {Generalized inequalities on warped product submanifolds in nearly trans-Sasakian manifolds.} \emph{J. Inequal. Appl.} \textbf{2014}, {\emph{2014}}, 346.
              \bibitem{O'Neill} O'Neill, B.,\textit{Semi-Riemannian geometry with applictions to relativity,} Academic Press, New York (1983).
             \bibitem{Oss} Osserman, R., \textit{Curvature in the eighties.} Amer. Math. Monthly 97 (1990), 731-756
             \bibitem{Pigaz} Pigazzini, A., \"Ozel, C., Jafari, S., Pincak, R., DeBenedictis,A., \textit{A family of special case of sequential warped product manifolds with semi-Riemannian Einstein metrics,} https://doi.org/10.48550/arXiv.2203.04572, (2022).
            \bibitem{Uddin} Uddin, S., Alqahtani, L. S., \textit{Chen type inequality for warped product immersionsin cosymplectic space forms,} J. Nonlinear Sci. Appl. 9 (2016), 2914–2921.
             \bibitem{US1} Uddin, S.; Al-Solamy, F.R.; Shahid, M.H.; Saloom, A.  {B.-Y. Chen's inequality for bi-warped products and its applications in Kenmotsu manifolds.} \emph{Mediterr. J. Math.} \textbf{2018}, {\emph{15}}, 193.
             \bibitem{UU3} Uddin, S.; Khan, K.A.   {An inequality for contact CR-warped product submanifolds of nearly cosymplectic manifolds.} \emph{J. Inequal. Appl.} \textbf{2012}, {\emph{2012}}, 304. 
             \bibitem{UU2} Uddin, S.; Mustafa, A.; Wong, R.B.; Ozel, C.   {A geometric inequality for warped product semi-slant submanifolds of nearly cosymplectic manifolds.} \emph{Rev. Un. Mat. Argentina.} \textbf{2014}, {\emph{55}}, 55--69.
 	
  	
, .

 \end{thebibliography}
\end{document}